\documentclass[reqno,a4paper]{amsart}

\usepackage[all,numberbysection]{preprint_style}
\usepackage[backend=biber,maxbibnames=5,maxalphanames=5,style=alphabetic,bibencoding=utf8,giveninits,url=false,isbn=false,doi=true,eprint=false]{biblatex}
\renewbibmacro{in:}{}
\AtBeginBibliography{\small}

\usepackage[T1]{fontenc}
\allowdisplaybreaks

\usepackage[top=2.75cm,bottom=2.75cm,left=3cm,right=3cm]{geometry}

\renewcommand{\gap}{\eta^2}
\renewcommand{\gapT}{\eta_T^2}
\newcommand{\gapphi}{\eta_{\phi}^2}
\newcommand{\gappsi}{\eta_{\psi}^2}
\newcommand{\gapphiT}{\eta_{\phi;T}^2}
\newcommand{\gappsiT}{\eta_{\psi;T}^2}
\newcommand{\gapext}{\overline{\eta}^2}

\newcommand{\phiN}{\widehat{\phi}}
\newcommand{\rhoext}{\overline{\rho}_{\mathrm{tot}}^2}
\newcommand{\rhoprimalext}{\overline{\rho}_{u}^2}
\newcommand{\rhodualext}{\overline{\rho}_{\bq}^2}
\renewcommand{\rhoprimal}{\rho_{u}^2}
\renewcommand{\rhodual}{\rho_{\bq}^2}

\begin{document}
	
	%\title[Error estimates via duality for non-conforming fields]{Error estimates via duality for non-conforming fields
	\title[Bregman divergences and estimates via duality]{Bregman divergences and error control via convex duality}
	
	\author[P.A.~Gazca-Orozco]{P.~A. Gazca-Orozco}
	
	\address[P.\ A. Gazca-Orozco]{Faculty of Mathematics and Physics, Charles University, Sokolovská 83, 186 75, Prague, Czech Republic}
	\email{gazca@karlin.mff.cuni.cz}
	
	\subjclass[2020]{
    49M29, % Numerical Methods involving duality
    65N15, % Error bounds for BVPs involving PDEs
		65N30, %Finite element, Rayleigh-Ritz and Galerkin methods for boundary value problems involving PDEs 
	}
	
  \keywords{Generalised Prager--Synge identity, Bregman divergence, duality gap estimator, quasioptimality}
	
	\date{\today}
	
	\setcounter{tocdepth}{1} 	
	\begin{abstract}
    Convex duality relations are a useful tool for deriving error estimates for challenging nonlinear and non-smooth variational problems.
    Applied at the continuous level they can deliver nonlinear analogues of the Prager--Synge a posteriori error identity,
    while at the discrete level they allow the derivation of minimal regularity a priori estimates.
 By leveraging elementary properties of Bregman divergences,
 we obtain three results on the error control via convex duality for a general class of problems:
 first, we prove a local efficiency bound for the duality gap error estimator,
 secondly, we derive a guaranteed a posteriori bound for non-conforming fields,
 and finally, we prove a minimal-regularity quasioptimal estimate for a Crouzeix--Raviart discretisation of the $\varphi$-Laplace problem.
	\end{abstract}

    \maketitle

%	\tableofcontents

%  \comment[Lit review]{
%    \begin{itemize}
%      \item \textbf{Stongly monotone problems:}
%        \begin{itemize}
%          \item See Pascal's papers, also using energy methods
%        \end{itemize}
%      \item \textbf{Non-conforming}:
%      \item \textbf{Semilinear}:
%      \item \textbf{Quasilinear problems}:
%        \begin{itemize}
%          \item Earlier $p$-Laplace results: \cite{Ver.1994,CK.2003,CFP.2007}
%          \item \cite{Vee.2002} does convergence using the PDE residual as an estimator.
%          \item \cite{HSW.2008} does $hp$-adaptivity with residual estimators.
%          \item \cite{BDK.2012} proves optimality of AFEM based on residual estimators.
%          \item \cite{ElAEV.2011} does $p$-structure, conforming, not natural distance.
%          \item \cite{EV.2013} does $p$-structure, non-conforming (jumps from the beginning), not natural distance.
%        \end{itemize}
%      \item \textbf{Energy methods:}
%        \begin{itemize}
%          \item \cite{FRV.2024} does strongly non-smooth regularised problems, fully adaptive based on gap estimators.
%            \cite{HMRV.2024} does strongly-monotone Lipschitz with robust estimates wrt nonlinearity.
%        \end{itemize}
%    \end{itemize}
%  }

  \section{Introduction}
  Let $\Omega \subset \RRd$ be a bounded polyhedral Lipschitz domain, and
  consider the problem of minimising a convex energy functional of the form 
  %$I\colon W^{1,p}_0(\Omega) \to \Rext$ of the form 
  \begin{equation}\label{eq:primal_energy}
I(v) \coloneqq \int_\Omega \phi(\cdot,\nabla v) + \int_\Omega \psi(\cdot, v),
  \end{equation}
  where  the functions $\phi\colon \Omega \times \RRd\to \Rext$ and $\psi\colon \Omega \times \RR \to \Rext$ are convex normal integrands (the precise definition will be provided later).
  %such that $\bz\in \RRd \mapsto \phi(x,\bz)\in \Rext$ and $z\in \RR\to\psi(x,z)\in \Rext$ are for a.e.\ $x\in \Omega$ proper, convex, and lower semi-continuous.
  It is well known that under certain coercivity and growth assumptions, there exists a minimiser $u\colon \Omega \to \RR$, hereafter referred to as the \emph{primal solution}, belonging to an appropriate Sobolev $W_0^{1,p}(\Omega)$ (or Orlicz--Sobolev $W_0^{1,\phi}(\Omega)$) space.
  One can associate to this minimisation problem a \emph{dual problem},
  corresponding to the maximisation of the dual energy $D$, defined usually on $L^{p'}(\Omega)$, which here we assume can be written in terms of convex integrands:
  \begin{equation}\label{eq:dual_energy}
D(\br) \coloneqq
- \int_\Omega \phi^*(\cdot,\br) - 
\int_\Omega \psi^*(\cdot, \diver \br),
  \end{equation}
  where $\phi^*$ and $\psi^*$ denote the Fenchel conjugates (with respect to the second variable) of $\phi$ and $\psi$, respectively,
  and $\diver\coloneqq \nabla^*$ represents the adjoint operator of $\nabla$.
  The maximiser of the dual energy $\bq\colon \Omega \to \RRd$ is then known as the \emph{dual solution}.
%  In this work we restrict ourselves to the case where the dual problem is well posed on a smaller subset of $L^{p'}(\Omega)$ such that $\diver\br$ above represents a proper function (for instance $W^{p'}(\diver;\Omega)$), so that the second term in \eqref{eq:dual_energy} is actually the integral of a measurable function.
  In this work we will moreover consider the situation where a primal and a dual solution exist, and furthermore there is no \emph{duality gap}:
  \begin{equation}\label{eq:strong_duality_intro}
    \sup_{\br \in L^{p'}(\Omega)} D(\br )
    =
    \inf_{v\in W^{1,p}(\Omega)} I(v).
  \end{equation}
  This setting covers a wide variety of convex minimisation problems;
  see for instance \cite{ET.1999,Rep08,BK.2024} and the references therein.

  Now, it is a classical observation that the strong duality relation \eqref{eq:strong_duality} implies the following identity for arbitrary (admissible) $v\colon \Omega \to \RR$ and $\br\colon \Omega \to \RR^d$ % $v\in \WonepD{1}$ and $\br\in \WdivpN{1}$ (see e.g.\ \cite{Rep08,BK.2024}): 
  \begin{equation}\label{eq:error_identity_intro}
    \left[I(v)-I(u)\right] + 
    \left[D(\bq)-D(\br)\right]
    = \gap(v,\br),
  \end{equation}
  where $\gap(v,\br)\coloneqq I(v)-D(\br)$ is the so-called duality gap error estimator.
  Since the energy differences on the left-hand-side constitute a natural way of defining the error,
  this constitutes an a posteriori error identity that is reliable and efficient with constant 1 (for earlier results where upper bounds are derived see \cite{RX.1997,Rep.1999,Rep08}).
  This has been exploited in the a posteriori error control of a very wide range of problems;
  see e.g.\ \cite{Rep08,NR.2015,CL.2015,Tra.2024,FRV.2024,BK.2024,DS.2025,GK.2026} and the references therein.

  \subsection{Local efficiency of the duality gap estimator}
With the help of the integration by parts formula (assuming for simplicity that $v$ satisfies zero boundary conditions), the gap estimator can be written as:
\begin{equation}\label{eq:gap_estimator_intro}
  \gap(v,\br)=
  \int_\Omega \left[ \phi(\cdot,\nabla v) - \br\cdot \nabla v + \phi^*(\cdot,\br) \right]
  +
  \int_\Omega \left[ \psi(\cdot, v) - \diver\br \, v + \psi^*(\cdot,\diver\br) \right],
\end{equation}
where thanks to the Fenchel--Young inequality (see \eqref{eq:fenchel_young} below),
 the integrands are pointwise non-negative and vanish if and only if $v=u$ and $\br=\bq$.
 This motivates their use as local error estimators to drive for instance adaptive mesh refinement,
 as done in the works referenced above.
 However,
 a more thorough justification of this requires a \emph{local} efficiency estimate for $\eta(v,\br)$.
 \textbf{This is the first main objective of this work.}
  
 At first glance, it is not obvious how to write down a local estimate, since the error measure on the left-hand-side of \eqref{eq:error_identity_intro} is \emph{global}.
 This apparent difficulty will be overcome with the help of the notion of \emph{Bregman divergences}.
Given a differentiable function $\phi\colon \Omega \times \RRd \to \RR$ (we consider slightly more general functions afterwards),
 one defines its Bregman divergence (with respect to the second variable) as: 
 \begin{equation}
   \mathcal{D}_\phi(\ba,\bb) \coloneqq \phi(\ba)
 - \phi(\bb)- \phi'(\bb)\cdot (\ba-\bb),
  \qquad \text{for }\ba,\bb\in \RRd. 
 \end{equation}
 Even though in general it is not an actual \emph{distance} (since it is not necessarily symmetric),
 it is a natural notion of error associated to the variational problem for \eqref{eq:primal_energy};
 in particular, assuming that $\phi$ is strictly convex, it is non-negative and vanishes if and only if $\ba=\bb$.
 The concept of a Bregman distance has been applied in many contexts,
 ranging from the development of optimisation algorithms to the analysis of PDEs \cite{Kiw.1997,Bur.2016,BK.2023,KS.2024}.
 For us they will prove very useful, in that they will allow us to re-write the generalised Prager--Synge identity \eqref{eq:error_identity_intro} as
 \begin{equation}\label{eq:new_error_identity_intro}
   \int_\Omega \left[\mathcal{D}_\phi(\nabla v,\nabla u)  
+ \mathcal{D}_\psi( v,u)  
+ \mathcal{D}_{\phi^*}(\br,\bq)
+ \mathcal{D}_{\psi^*}(\diver\br,\diver \bq) \right]
= \eta^2(v,\br).
 \end{equation}
 The left-hand-side is simply the collection of error measures (Bregman divergences) associated to the integrands and their conjugates.
 Note that in the quadratic-linear case where $\phi(\cdot,\ba)=\frac{1}{2}|\ba|^2$,
 and $\psi(\cdot,a)=-f(\cdot)a$ with $f\in L^2(\Omega)$,
 the above reduces to the classical Prager--Synge identity for the Laplace equation,
 since the Bregman divergences for $\phi$ and $\phi^*$ are simply the squared Euclidean distance,
 and the ones for $\psi$ and $\psi^*$ vanish ($\mathcal{D}_{\psi^*}(\diver\br,\diver\bq)$ vanishes provided $\diver\br=-f$).
 The key advantage of \eqref{eq:new_error_identity_intro} over \eqref{eq:error_identity_intro} is the presence of integrands which are \emph{locally non-negative},
 which makes it straightforward to pose the question of local efficiency. 
 In this respect,
 assuming that the integrands satisfy what we call a Bregman--Young inequality (Assumption \ref{as:bregman_young}),
 we prove the following local efficiency estimate on an element $T$ (Theorem \ref{thm:local_efficiency}):
 \begin{equation}
   \eta_T^2(v,\br) \leq c_{\phi,\psi}
   \int_T \left[\mathcal{D}_\phi(\nabla v,\nabla u)  
+ \mathcal{D}_\psi(v,u)  
+ \mathcal{D}_{\phi^*}(\br,\bq)
+ \mathcal{D}_{\psi^*}(\diver\br,\diver \bq) \right],
 \end{equation}
 where $\eta^2_T(v,\br)$ is the variant of the estimator \eqref{eq:gap_estimator_intro} localised to the element $T$;
 the constant $c_{\varphi,\psi}$ depends only on the constant from the Bregman--Young inequality.
 For integrands of the form $\phi(\cdot,\ba)=\phiN(|\ba|)$,
 where $\phiN\colon \RRplus\to \RRplus$ is a uniformly convex $N$-function (see Definition \ref{def:unif_convexity}),
 we verify that the Bregman--Young inequality holds, and the constant depends only on the uniform convexity of the integrands;
 in particular it does not depend on non-explicit shape-regularity- or polynomial-degree-dependent quantities.
 As an application this includes for instance the $p$-Laplacian problem for $p\in(1,\infty)$ defined by:
 \begin{equation}
\phi(\cdot,\nabla v) \coloneqq \frac{1}{p}\int_\Omega |\nabla v|^p 
\qquad \psi(\cdot,v) \coloneqq-\int_\Omega f\cdot v,
 \end{equation}
where $f\in L^{p'}(\Omega)$.
In the context of the $p$-Laplacian, one often measures the error using the quantity $\norm{|\nabla v|^{\frac{p-2}{2}}\nabla v - |\nabla u|^{\frac{p-2}{2}}\nabla u }^2_\Omega$ (often called the \emph{natural distance}),
which is the one that leads to optimal error estimates of finite element discretisations, for instance.
This notion of error is actually equivalent to the one based on Bregman divergences that we employ (Proposition \ref{prop:young_n_function}),
so one could write the estimate as:
\begin{equation}\label{eq:intro_plaplace_efficiency}
  \begin{aligned}
  \int_T \left[\tfrac{1}{p}|\nabla v|^2 - \nabla v\cdot \br + \tfrac{1}{p'}|\br|^{p'} \right]
  \leq c_p &\left[
  \norm{|\nabla v|^{\frac{p-2}{2}}\nabla v - |\nabla u|^{\frac{p-2}{2}}\nabla u}^2_{L^2(T)}
  \right.\\
  &\quad+ \left. \norm{|\br|^{\frac{p'-2}{2}}\br - |\bq|^{\frac{p'-2}{2}}\bq}^2_{L^2(T)}
\right],
\end{aligned}
\end{equation}
for arbitrary $v\in W^{1,p}_0(\Omega)$ and $\br \in L^2(\Omega)^d$ with $\diver\br = -f\in L^2(\Omega)$, where $c_p$ depends solely on $p$.
To the best of our knowledge, local estimates for the $p$-Laplacian based on Prager--Synge error relations
have only been written in terms of $W^{1,p}$-norms \cite{ElAEV.2011,EV.2013}, which would deliver a convergence analysis with sub-optimal rates.
In contrast, the estimate \eqref{eq:intro_plaplace_efficiency} employs the natural distance,
thus paving the way to an analysis with optimal behaviour.
Reliability and (global) efficiency of primal-dual gap estimators similar to \eqref{eq:gap_estimator_intro} in the natural norms was derived in \cite{BK.2023,Kal.2024} for piecewise linear approximations, where however some non-explicit constants had to be introduced (for higher-order schemes see also \cite{CT.2021,Tra.2024}).

 \subsection{A posteriori estimate for non-conforming fields}
One of the difficulties of a framework based on energy minimisation is that it is not immediately obvious how to handle non-feasible objects.
For instance, if one wishes to estimate the error related to a discrete solution $v_h$ that arises from a non-conforming method,
it is not allowed to simply plug it into the error relation \eqref{eq:error_identity_intro},
since the energy $I(v)$ is technically set to $+\infty$ in this case.
To deal with this, we will define an extended primal energy on the broken Sobolev space $\Wonepbroken{1}$:
\begin{equation}
\begin{gathered}
%  \overline{I}\colon \Wonepbroken{1} \to \Rext  \\
  \overline{I}(v) \coloneqq 
  \int_\Omega \varphi(\cdot,\discgrad v)
  + \int_\Omega \psi(\cdot,  v),
\end{gathered}
\end{equation}
where $\discgrad\colon \Wonepbroken{1}\to L^{1}(\Omega)^d$ 
 represents a discrete approximation of the gradient operator.
 This could be for instance a DG gradient \cite{BO.09} or simply the broken (piecewise) gradient $\discgrad = \nabla_h$, % $\discdiv=\diver_h$,
but could as well represent an approximation derived from other numerical methods.
We will assume that this approximate gradient coincides with the usual gradient when conforming functions are involved, 
i.e.\ $\discgrad v = \nabla v$ for all $v\in W^{1,1}(\Omega)$.

The energy $\overline{I}$ is defined for a wider range of functions;
note that a function $v$ is admissible as long as pointwise admissibility holds: $\discgrad v \in \dom\varphi$ and $\discdiv v \in \dom\psi$.
At the same time, it will still allow the derivation of useful estimates,
since both energies $I$ and $\overline{I}$ involve \emph{the same} convex integrands $\varphi$ and $\psi$.
We stress that it is not assumed that the energy $\overline{I}$ has a minimiser;
it is merely a tool for deriving error bounds.
In a similar way we define a modified dual energy on the broken space $\Wdivbroken{1}$:
\begin{equation}
\begin{gathered}
%  \overline{D}\colon \Wdivbroken{1} \to \Rextm  \\
  \overline{D}(\br) \coloneqq 
  - \int_\Omega \varphi^*(\cdot,\br)
  - \int_\Omega \psi^*(\cdot, \discdiv \br),
\end{gathered}
\end{equation}
where now $\discdiv\colon \Wdivbroken{1}\to L^{1}(\Omega)$ represents some discrete approximation of the divergence operator, and is such that $\discdiv \br = \diver \br$ for all conforming $\br\in W^{1}(\diver;\Omega)$.

Similarly to the conforming case,
a simple consequence of strong duality $I(u)=D(\bq)$ is the following error identity for possibly non-conforming $v$ and $\br$:
\begin{equation}
  \overline{I}(v)-I(u) 
  + D(\bq) -  \overline{D}(\br)
  = \overline{I}(v) - \overline{D}(\br).
\end{equation}
This relation is however not immediately useful, since $\overline{I}(v)-I(u)$ is for instance not necessarily non-negative and so cannot represent an actual error. However, a few simple computations will yield the extension of the error relation \eqref{eq:new_error_identity_intro}:
\begin{equation}
 \begin{aligned}
   \int_\Omega &\left[\mathcal{D}_\phi(\discgrad v,\nabla u)  
+ \mathcal{D}_\psi(v,u)  
+ \mathcal{D}_{\phi^*}(\br,\bq)
+ \mathcal{D}_{\psi^*}(\discdiv\br,\diver \bq) \right]
= \gapext(v,\br) \label{eq:new_error_identity_intro_ext} \\
 &\qquad\qquad+
(\br-\bq,\discgrad v - \nabla u)_\Omega 
+ 
(\discdiv \br - \diver \bq, v-u)_\Omega,
 \end{aligned}
\end{equation}
 where the extended gap estimator $\gapext$ is simply defined on the broken spaces as 
  \begin{equation}
    \gapext(v,\br) \coloneqq
\int_\Omega \left[ \varphi(\cdot,\discgrad v) 
  - \br\cdot \discgrad v
 + \phi^*(\cdot,\br)
\right]
+
\int_\Omega \left[ \psi(\cdot,v) 
  -v\, \discdiv \br  
 + \psi^*(\cdot,\discdiv\br)
\right].
  \end{equation}
  Now, recalling that $\int_\Omega (\bs \cdot\nabla s + s\diver \bs)=0$ for all conforming $s$ and $\bs$ (with appropriate boundary conditions), we can swap for example the exact solution $u$ for an arbitrary conforming function $s$ in the right-hand-side, assuming that $\br$ is div-conforming.
  An application of a Bregman--Young inequality with $\varepsilon$ to handle the last two terms (Assumption \ref{as:bregman_young_eps}) will then yield the following bound (see Theorem \ref{thm:non_conforming_estimate}):
      \begin{equation}\label{eq:estimate_Hdiv_conforming_intro}
\rhoext(v,\br) \lesssim
\gapext(v,\br) 
+
\inf_{s\in \WonepD{1}}\left[ 
  \int_\Omega \mathcal{D}_{\phi}(\discgrad v,\nabla s)
  +
  \int_\Omega \mathcal{D}_{\psi}(v,s)
\right],
      \end{equation}
      where the constant depends again only on the Bregman--Young inequality constant;
      as before, this will apply to uniformly convex integrands,
      covering in particular the $p$-Laplace problem. 
      The last term in \eqref{eq:estimate_Hdiv_conforming_intro} measures the distance to the conforming space $W^{1,1}(\Omega)$,
      and could then be interpreted as a non-conformity estimator for the function $v$.
      A similar estimate also holds for non $W^{1}(\diver;\Omega)$-conforming $\br$, assuming that $v$ is conforming (see \eqref{eq:estimate_H1_conforming}).
      In practice one would estimate the infimum from above by choosing a particular conforming reconstruction of $v$ (or $\br$); 
      this has been done for problems posed on $H^1(\Omega)$ (e.g.\ for the Laplace equation with $\phi(s)=\frac{1}{2}|s|^2$) in various works, such as \cite{ElAE.2004,EV.2015,AF.2018,C-F.2025},
      where it was shown for instance that a potential reconstruction based on local Poisson solves yields a locally efficient polynomial-robust estimator; see Remark \ref{rem:non_conforming} for more details.
      For problems with $p$-structure we are aware of the work \cite{EV.2013},
      which however included a residual jump estimator from the beginning as a non-conformity measure,
      but as mentioned in the previous section, derived local bounds in the sub-optimal $W^{1,p}$-norms.

      The idea of using an extended energy to handle non-feasible functions was employed recently in \cite{GK.2026} to derive estimates for the incompressible Stokes system without requiring exactly divergence-free velocities,
      and for linear elasticity without requiring exact symmetry of the stress.
      In Section \ref{sec:obstacle} we show another example of a guaranteed a posteriori bound for the obstacle problem, without requiring exact positivity of the multiplier.
      It is conceivable that the method would apply to a general class of convex optimisation problems with constraints,
      where only an additional estimator measuring the distance to the feasible set is needed; this will be the subject of future work.

 \subsection{Quasioptimal a priori estimate for uniformly convex energies}

 The main ingredients in the derivation of the error identity \eqref{eq:new_error_identity_intro} (or \eqref{eq:error_identity_intro}) were the strong duality principle \eqref{eq:strong_duality_intro} and the integration  by parts formula.
 These two ingredients are also available at the discrete level,
 provided appropriate discretisations are employed;
 for instance, if the primal and dual problems are discretised with the Crouzeix--Raviart element $V^h$,
 and the lowest-order Raviart--Thomas element $\Sigma^h$, respectively (for the precise definitions see Section \ref{sec:FE}),
 then an integration by parts formula holds, without requiring any jump terms \cite{BW.2021}:
  \begin{equation}
\int_\Omega  (v_h\diver \br_h + \br_h\cdot \nabla_h v_h)
= \int_{\partial\Omega} \br_h\cdot \bn  v_h
\qquad 
\forall\, \br_h\in \Sigma^h,\, v_h\in V^h,
  \end{equation}
  where $\nabla_h$ represents the piecewise gradient operator.
In addition, if an appropriate discrete versions of the primal and dual energies are considered,
then a discrete strong duality relation will hold.
For simplicity take for example the $p$-Laplace energy on the Crouzeix--Raviart space with zero boundary conditions $V^h_0$ 
(a more general setting will be considered in Section \ref{sec:quasioptimality}); the discrete primal energy then takes the form
\begin{equation}
  I_h(v_h)\coloneqq \frac{1}{p}\int_\Omega |\nabla_h v_h|^p 
  - \int_\Omega f \Pi_hv_h,
\end{equation}
where $p\in (1,\infty)$, $f\in L^{p'}(\Omega)$, and $\Pi_h$ represents the $L^2$-orthogonal projection into piecewise constants. The corresponding dual problem is then:
\begin{equation}
  D_h(\br_h) \coloneqq -\frac{1}{p'}\int_\Omega |\Pi_h\br_h|^{p'} - \chi^{\Omega}_{\{- \Pi_h f\}}(\diver\br_h),
\end{equation}
where the second term is an indicator function that enforces the constraint $\diver\br_h = -\Pi_h f$ a.e.\ in $\Omega$.
If the discrete primal and dual solutions are denoted $u_h\in V^h_0$ and $\bq_h\in \Sigma^h$, respectively,
then discrete strong duality holds \cite{BK.2023}: $I_h(u_h)=D_h(\bq_h)$.
This allows one to mimic the continuous analysis at the discrete level, and derive a discrete analogue of the Prager--Synge identity:
\begin{equation}\label{eq:discrete_prager_synge_intro}
  \int_\Omega \mathcal{D}_\varphi(\nabla_h v_h, \nabla_h u_h)
  +
  \int_\Omega \mathcal{D}_{\varphi^*}(\Pi_h\br_h,\Pi_h\bq_h)
  =
\int_\Omega 
[\varphi(\nabla_h v_h) - 
\Pi_h\br_h\cdot \nabla_h v_h + \varphi^*(\Pi_h\br_h)],
\end{equation}
where $\phi$ is simply defined as $\phi(\bs)= \frac{1}{p}|\bs|^p$,
so $\phi^*(\bs)= \frac{1}{p'}|\bs|^{p'}$.
Such an identity characterises the discrete error (the distance to the discrete solutions) in terms of computable quantities: it is pointwise non-negative and vanishes if and only if $(v_h,\br_h)=(u_h,\bq_h)$;
an analogous identity had been recently derived  in the context of linear elasticity and the incompressible Stokes system \cite{GK.2026} and the Signorini problem \cite{BGK.2025}.
Given the computable nature of the right-hand-side (note also that no unknown constants are present),
it can be used to drive inexact nonlinear solvers; in the context of the $p$-Laplacian the upper bound for conforming schemes was applied to a Ka\v{c}anov iterative scheme in \cite{DS.2025}. Earlier works such as \cite{EV.2013} employed instead an error measure based on $W^{1,p}$-norms, which is suboptimal.

In this work we leverage the identity \eqref{eq:discrete_prager_synge_intro} to derive a \emph{quasioptimal a priori estimate} for the Crouzeix--Raviart and Raviart--Thomas discretised problems.
Similar results were derived recently in \cite{GK.2026} for the Laplace equation, and the linear elasticity and incompressible Stokes systems.
The idea is in essence to employ an analogous estimate to \eqref{eq:intro_plaplace_efficiency} to turn the right-hand-side of \eqref{eq:discrete_prager_synge_intro} into the distance between the exact solution and arbitrary discrete functions,
leading to the equivalence between the discretisation error and the best-approximation error (see Theorem \ref{thm:quasioptimal}):
\begin{equation}\label{eq:a_priori_intro}
%\rhotot(\nabla_h u_h,\Pi_h \bq_h)
  \int_\Omega \mathcal{D}_\varphi(\nabla_h u_h, \nabla u)
  +
  \int_\Omega \mathcal{D}_{\varphi^*}(\Pi_h\bq_h,\bq)
\sim
\inf_{v_h \in V^h_0}
\int_\Omega \mathcal{D}_{\phi}(\nabla_h v_h, \nabla u)
  +
  \inf_{\substack{\br_h \in \Sigma^h \\ \diver \br_h = - \Pi_h f}}
  \int_\Omega   \mathcal{D}_{\phi^*}(\Pi_h\br_h, \bq).
\end{equation}
This estimate does not contain indeterminate constants, and they depend solely on the uniform convexity of $\phi$ (see \eqref{eq:uniform_convexity}).
Also, unlike the textbook analysis of Crouzeix--Raviart discretisations of elliptic problems,
this is a minimal-regularity result, requiring only $u\in W^{1,p}_0(\Omega)$ and $\bq \in W^{p'}(\diver;\Omega)$,
and thus guarantees convergence to minimal-regularity solutions;
convergence rates can then be obtained if additional regularity is at hand, by employing appropriate interpolation opearators; for the pioneering works with such suboptimal assumptions see \cite{LY.2001,CL.2015}.
Other a priori error bounds for the $p$-Laplacian with improved regularity assumptions include \cite{Kal.2024} and
\cite{Sto.2025}, 
where the bounds were based on \emph{medius analysis},
which have the downside that they contain unknown shape-regularity-dependent constants and must include data oscillation terms.
Truly minimal-regularity estimates with no oscillation terms like \eqref{eq:a_priori_intro} were derived in
\cite{BGKR.2026}, which however include again non-explicit constants, require a jump stabilisation term, and the implementation of a so-called smoothing operator on the right-hand-side (see also related results in \cite{DHKZ.2025}).

  \section{Preliminaries}

  \subsection{Convexity and Bregman distances}
  We first recall that the \emph{effective domain} of a convex function $\phi\colon \RRn \to \Rext$ is defined as $\dom\phi\coloneqq \{\ba \in \RRn \mid \phi(\ba)<+\infty\}$, and we say that $\phi$ is proper if $\dom\phi \not=\emptyset$.
The subdifferential of a proper convex function $\varphi\colon \RR^n\to \Rext$ at a point $\ba\in \dom\varphi$ is defined as  
\begin{equation}\label{eq:subdifferential}
\partial\varphi(\ba)\coloneqq 
\{\br\in \RR^n \mid \varphi(\ba) + \br\cdot (\bb-\ba) \leq \varphi(\bb)\text{ for all }\bb\in \RRn \},
\end{equation}
(setting $\partial\phi(\ba)=\emptyset$ if $\ba\not\in \dom\phi$); its domain is simply $\dom\partial\phi=\{\ba\in\RRn \mid \partial\phi(\ba)\not= \emptyset\}$.
The (Fenchel) conjugate of a proper convex function $\phi\colon \RR\to \Rext$ is the 
proper convex and lower semicontinuous function $\phi^*\colon \RRn\to \Rext$ defined by:
\begin{equation}
\phi^*(\ba^*)\coloneqq 
\sup_{\ba\in \RRn}
\left[ 
\ba^*\cdot \ba - \phi(\ba)
\right].
\end{equation}
Moreover, for any proper convex lower semicontinuous function $\phi\colon \RRn\to\Rext$,
the \emph{Fenchel--Young} inequality holds:
\begin{equation}\label{eq:fenchel_young}
\ba^*\cdot \ba \leq 
\phi(\ba) + \phi^*(\ba^*)
\qquad \forall\,\ba,\ba^*\in\RRn,
\end{equation}
with equality being identical to the duality relations:
\begin{equation}\label{eq:equality_young}
\ba^*\cdot \ba =
\phi(\ba) + \phi^*(\ba^*)
\quad \Leftrightarrow \quad 
\ba \in \partial\phi^*(\ba^*) 
\quad \Leftrightarrow \quad 
\ba^* \in \partial\phi(\ba).
\end{equation}

The (generalised) Bregman distance \cite{Kiw.1997} $\mathcal{D}^{\br}_{\varphi}\colon \dom\varphi\times \dom\partial\varphi$ with respect to the proper convex function $\varphi$ and a given subgradient $\br\in\partial\varphi(\bb)$ is defined as
\begin{equation}\label{eq:bregman}
  \mathcal{D}^{\br}_\phi(\ba,\bb)\coloneqq 
    \phi(\ba) - \phi(\bb)
    -  \br\cdot ( \ba - \bb).
  \end{equation}
We remark that we are collecting these helpful concepts for functions on $\RRn$, since they will be applied at the pointwise level to the integrands in \eqref{eq:primal_energy} and \eqref{eq:dual_energy}.
  However, we note that they can be applied at the infinite-dimensional level as well to obtain global estimates \cite{Bar.2015,Bur.2016}.
  Whenever $\varphi$ is differentiable at a point $\ba \in\RRn$, one has $\partial\varphi(\ba)=\{\varphi'(\ba)\}$ and we ommit the upper index: $\mathcal{D}_{\varphi}\coloneqq\mathcal{D}_{\varphi}^{\varphi'(\bb)}$.
%  \begin{equation}
%  \mathcal{D}_\phi(\ba,\bb)\coloneqq 
%    \phi(\ba) - \phi(\bb)
%    -  \varphi'(\bb)\cdot ( \ba - \bb).
%  \end{equation}
  The following lemma summarises basic properties of Bregman divergences that will be useful in our analysis.
  \begin{lemma}[\cite{CT.1993,Kiw.1997,Bur.2016}]\label{lem:bregman}
Suppose $\phi\colon \RRn \to \Rext$ is convex and proper. Then the following properties hold:
\begin{enumerate}
  \item {[Non-negativity]} One has $\mathcal{D}^{\br}_\phi(\ba,\bb)\geq 0$ for all $\ba\in \dom\phi$, $\bb\in \dom\partial\phi$, and $\br\in\partial\phi(\bb)$.
    Moreover,
    $\mathcal{D}^{\br}_{\phi}(\ba,\ba) = 0$ for all $\ba\in \dom\phi \cap \dom\partial\phi$.

  \item {[Positivity]} If $\phi$ is strictly convex, then $\mathcal{D}^{\br}_\phi(\ba,\bb)>0$ if $\ba\not=\bb$.

%  \item {[Shift]} Let $r \in \partial\phi(w)$. The function $v\mapsto \mathcal{D}^{r}_\phi(v,w)$ is convex, and its subdifferential is given by $\partial_v\mathcal{D}_\phi^{r}(v,w)=\partial\phi(v) - r$.

%    \textcolor{morado}{[Also, might be useful, its conjugate is $H^*(p)= \phi^*(p+r)-\phi^*(r)$.]}

  \item \label{lem:bregman_sharing_subgradients} {[Sharing of subgradients]} For $\br \in \partial\phi(\bb)$ one has:
    \begin{equation}
      \mathcal{D}^{\br}_\phi(\ba,\bb) =
      \phi(\ba) + \phi^*(\br) - \br\cdot \ba
    \end{equation}
    In particular $\mathcal{D}^{\br}_\phi(\ba,\bb)=0$ if and only if $\br\in \partial\phi(\ba)$.

  \item {[Conjugate distance]} Let $\br\in \partial \phi(\bb)$ and $\bs \in \partial\phi(\ba)$. Then 
    \begin{equation}
      \mathcal{D}^{\br}_\phi(\ba,\bb)
      =
      \mathcal{D}^{\ba}_{\phi^*}(\br,\bs).
    \end{equation}
\end{enumerate}
%{
%  \color{morado}
%  [Another, possibly useful ones?]
%  $$ \mathcal{D}_\phi^r(v,w) + \mathcal{D}^s(w,v) = \langle r-s,w-v\rangle$$
%}
  \end{lemma}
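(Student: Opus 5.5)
The four items follow directly from the definitions \eqref{eq:subdifferential}, \eqref{eq:bregman} and the Fenchel--Young relations \eqref{eq:fenchel_young}--\eqref{eq:equality_young}. I will prove them in order, reusing each item in the next.

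\textbf{Non-negativity.} Take $\ba\in\dom\phi$, $\bb\in\dom\partial\phi$ and $\br\in\partial\phi(\bb)$. The subgradient inequality \eqref{eq:subdifferential} at $\bb$, tested with $\ba$, gives $\phi(\ba)\geq\phi(\bb)+\br\cdot(\ba-\bb)$, which is exactly $\mathcal{D}^{\br}_\phi(\ba,\bb)\geq0$. If $\ba=\bb$, all three terms in \eqref{eq:bregman} cancel, since $\phi(\ba)$ is finite.

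\textbf{Positivity.} Suppose $\phi$ is strictly convex, $\ba\neq\bb$, and $\mathcal{D}^{\br}_\phi(\ba,\bb)=0$. Then the affine minorant $\ell(\bc)=\phi(\bb)+\br\cdot(\bc-\bb)$ touches $\phi$ at both $\ba$ and $\bb$. At the midpoint $\bm=\frac12(\ba+\bb)$, strict convexity gives
\[
\phi(\bm)<\tfrac12\phi(\ba)+\tfrac12\phi(\bb)=\ell(\bm),
\]
contradicting $\phi\geq\ell$. Hence $\mathcal{D}^{\br}_\phi(\ba,\bb)>0$.

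\textbf{Sharing of subgradients.} Since $\br\in\partial\phi(\bb)$, the equality case \eqref{eq:equality_young} gives $\phi(\bb)=\br\cdot\bb-\phi^*(\br)$. Substituting this into \eqref{eq:bregman}, the $\br\cdot\bb$ terms cancel and
\[
\mathcal{D}^{\br}_\phi(\ba,\bb)=\phi(\ba)+\phi^*(\br)-\br\cdot\ba.
\]
By \eqref{eq:equality_young} again, this vanishes if and only if $\br\in\partial\phi(\ba)$.

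The one subtlety is that \eqref{eq:fenchel_young}--\eqref{eq:equality_young} were stated for lower semicontinuous $\phi$, whereas the lemma assumes only properness. However, the implication $\br\in\partial\phi(\bb)\Rightarrow\phi(\bb)+\phi^*(\br)=\br\cdot\bb$ follows directly from the definition of $\phi^*$ as a supremum. The supremum is attained at $\bb$ precisely because the subgradient inequality says $\br\cdot\bc-\phi(\bc)\leq\br\cdot\bb-\phi(\bb)$ for all $\bc$. The same argument at $\ba$ gives the "if and only if".

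\textbf{Conjugate distance.} This is the step needing most care, because $\phi^{**}=\phi$ may fail without lower semicontinuity. Let $\br\in\partial\phi(\bb)$ and $\bs\in\partial\phi(\ba)$. The equality $\phi(\ba)+\phi^*(\bs)=\bs\cdot\ba$ holds by the direct argument above. We need $\ba\in\partial\phi^*(\bs)$, i.e.\ $\phi^*(\bt)\geq\phi^*(\bs)+\ba\cdot(\bt-\bs)$ for all $\bt$. This follows since
\[
\phi^*(\bt)\geq\bt\cdot\ba-\phi(\ba)=\bt\cdot\ba-\bs\cdot\ba+\phi^*(\bs).
\]
So $\ba$ is an admissible subgradient for the definition of $\mathcal{D}^{\ba}_{\phi^*}(\br,\bs)$. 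Expanding,
\[
\mathcal{D}^{\ba}_{\phi^*}(\br,\bs)=\phi^*(\br)-\phi^*(\bs)-\ba\cdot(\br-\bs).
\]
Substituting $\phi^*(\bs)=\bs\cdot\ba-\phi(\ba)$ yields $\phi^*(\br)+\phi(\ba)-\ba\cdot\br$. By the sharing-of-subgradients formula this equals $\mathcal{D}^{\br}_\phi(\ba,\bb)$.

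Finally, the domain requirements hold: $\br\in\dom\phi^*$ because $\phi^*(\br)=\br\cdot\bb-\phi(\bb)$ is finite, and $\bs\in\dom\partial\phi^*$ because $\ba\in\partial\phi^*(\bs)$.
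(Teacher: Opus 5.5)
Your proof is correct. The paper does not prove this lemma; it only cites \cite{CT.1993,Kiw.1997,Bur.2016}. Your argument is therefore a self-contained verification rather than a different route to a proof given in the paper.

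You derive each item directly from the subgradient inequality \eqref{eq:subdifferential} and the definition of $\phi^*$ as a supremum. Positivity comes from the midpoint argument, and the conjugate distance from the substitution $\phi^*(\bs)=\bs\cdot\ba-\phi(\ba)$.

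What your version adds is a check of the lemma in exactly the generality stated: convex and proper, with no lower semicontinuity. This is not automatic from \eqref{eq:equality_young} as quoted in the paper, which assumes lower semicontinuity. You use only implications that hold for every proper $\phi$, namely $\br\in\partial\phi(\ba)\Leftrightarrow\phi(\ba)+\phi^*(\br)=\br\cdot\ba$ and $\bs\in\partial\phi(\ba)\Rightarrow\ba\in\partial\phi^*(\bs)$. You never need the converse $\ba\in\partial\phi^*(\bs)\Rightarrow\bs\in\partial\phi(\ba)$, nor $\phi^{**}=\phi$, both of which can fail for non-lsc $\phi$.

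Your final check that $\br\in\dom\phi^*$ and $\bs\in\dom\partial\phi^*$ also makes explicit that $\mathcal{D}^{\ba}_{\phi^*}(\br,\bs)$ is well defined. The statement leaves this implicit.
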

  While $\mathcal{D}^{\br}_\phi$ is not necessarily symmetric, and thus is not a distance in the traditional sense,
  it is a nonlinear notion of distance tailored to the structure of the problem and has been used in the analysis of various nonlinear problems;
  see e.g.\ \cite{Kiw.1997,Bur.2016,BK.2023,KS.2024}.

  There is one further property that will be crucial in the subsequent analysis,
  which we now introduce as an assumption. This will be verified for instance in the particular case in which $\varphi$ is a uniformly convex $N$-function (see Proposition \ref{prop:young_n_function} below).

  \begin{assumption}[Bregman--Young inequality]\label{as:bregman_young}
    Let $\ba_1\in \dom\phi$, $\ba_2\in\dom\partial\phi$, $\br_1\in \dom\phi^*$, and $\br_2\in \dom\partial\phi^*$,
    as well as the subgradients $\bp\in\partial\phi(\ba_2)$, $\bb \in \partial\phi^*(\br_2)$ be arbitrary.
    Suppose further that  the duality relation $\ba_2 \in \partial \phi^*(\br_2)$ holds.
We say that $\varphi$ satisfies the Bregman--Young inequality if one has
\begin{equation}\label{eq:bregman_young}
|(\ba_1 - \ba_2)\cdot(\br_1-\br_2) |
\lesssim 
\mathcal{D}^{\bp}_{\phi}(\ba_1,\ba_2) + 
\mathcal{D}^{\bb}_{\phi^*}(\br_1,\br_2).
\end{equation}
We will mostly apply this with $\bp = \br_2$ and $\bb=\ba_2$.
%\commentalexei{Check if this implies the one with $\varepsilon$. 
%Check the subgradients?}
%Also, I may need to write these using the optimality relations and not in general...}
  \end{assumption}

  \subsection{Functional setting}

  Throughout this work, the domain $\Omega\subset \RRd$ ($d\in\RRd$) will represent a bounded Lipschitz domain.
  We will make use of standard notation for Lebesgue $(L^p(\Omega))$ and Sobolev spaces $W^{1,p}(\Omega)$ ($p\in [1,\infty]$).
  The conjugate exponent $p'\in[1,\infty]$ of a number $p\in [1,\infty]$ is characterised through the relation $\frac{1}{p}+\frac{1}{p'}=1$.
  The inner product and norm of the space $L^2(\Omega)$ will be denoted, respectively,
  by $(\cdot,\cdot)_\Omega$ and $\norm{\cdot}_\Omega$.
  We will also make use of the spaces
  \begin{equation}
    W^p(\diver;\Omega) \coloneqq \{\br \in L^p(\Omega)^d \mid \diver \br \in L^p(\Omega)\},
    \quad p\in [1,\infty].
  \end{equation}
Regarding boundary conditions,
let us split the boundary $\partial \Omega$ into a Dirichlet $\Gamma_D$ and Neumann $\Gamma_N$ boundaries,
such that $\Gamma_D$ and $\Gamma_N$ are (relatively) open, disjoint, and such that $\partial\Omega\setminus(\Gamma_D\cup\Gamma_N)$ has zero $(d-1)$-dimensional measure.
Homogeneous mixed boundary conditions will often be considered for simplicity,
so the following spaces will be useful:
\begin{align}
  \WonepD{p} &\coloneqq 
  \{v\in W^{1,p}(\Omega) \mid v|_{\Gamma_D}=0\text{ in }W^{\frac{1}{p'},p}(\Gamma_D)\}, \\
  \WdivpN{p'} &\coloneqq 
  \{\br \in  W^{p'}(\diver;\Omega) \mid (\br,\nabla v)_\Omega + (\diver\br,v)_\Omega = 0 \quad \forall\, v\in \WonepD{p}\}. \label{eq:WdivN}
\end{align}

  The main goal in this work is to characterise the (Bregman) distance to the minimiser of the convex energy functional
  \begin{equation}\label{eq:primal_energy2}
I(v) \coloneqq \int_\Omega \phi(\cdot,\nabla v) + \int_\Omega \psi(\cdot, v),
  \end{equation}
  We will assume here that $\phi\colon \Omega \times \RRd \to \Rext$ and $\psi\colon \Omega \times \RR \to \Rext$ are \emph{normal convex integrands} \cite{Roc.1971},
  meaning that they are convex functions in the second variable, such that their epigraphical mapping (defined similarly for $\psi$)
  $x\in \Omega \to \mathrm{epi}\phi(x,\cdot)\subset \RRd \times \RR$ is closed-valued and measurable.
  This includes for instance
  \begin{itemize}
    \item Carathéodory integrands: $x\mapsto \phi(x,\ba)$ is measurable for all $\ba\in \RRd$ and $\ba\mapsto \phi(x,\ba)$ is continuous for almost all $x\in\Omega$.
    \item Autonomous lower-semicontinuous integrands: $\phi(x,\ba)=\widetilde{\phi}(\ba)$ with $\widetilde{\phi}$ lower semicontinuous.
    \item Indicator functionals of a measurable, closed- and convex-valued mapping $x\in \Omega \mapsto K(x) \subset \RRd$: a function  $\chi_K \colon \Omega \times \RRd \to \Rext$ of the type:
      \begin{equation}\label{eq:indicator}
\chi_K(x,\ba) \coloneqq 
\chi_{K(x)}(\ba) 
= 
\left\{ 
  \begin{array}{cc}
    0 & \text{if }\ba\in K(x),\\
    +\infty & \text{if }\ba \not\in K(x),\\
  \end{array}
  \right.
      \end{equation}
  \end{itemize}
  We note also that the convex conjugate (defined with respect to the second variable) $\varphi^*\colon \Omega \times \RRd \to \Rext$ is in this case also a convex normal integrand.
  The integrals in \eqref{eq:primal_energy2} are then interpreted in an extended sense;
  see \cite{RF.1998} for more details.
  In particular, we can interpret for instance for a function of the type $\phi(x,\cdot)= \widetilde\phi(x,\cdot)+\chi_{K(x)}$,
  where $\widetilde{\phi}$ is a Carathéodory integrand, the integral as:
  \begin{equation}
\int_\Omega \phi(\cdot,\nabla v) =
\left\{
  \begin{array}{cc}
    \int_\Omega \widetilde{\phi}(\cdot,\nabla v) & \text{if }\nabla v(\cdot)\in K(\cdot) \text{ a.e.\ in }\Omega \\ 
    +\infty & \text{otherwise.}
  \end{array}
\right.
  \end{equation}
  From Fenchel duality theory \cite{ET.1999}, it is known that one can associate to the minimisation problem above a dual maximisation problem, associated to the dual energy
  \begin{equation}\label{eq:dual_energy2}
D(\br) \coloneqq
- \int_\Omega \phi^*(\cdot,\br) - 
\int_\Omega \psi^*(\cdot, \diver \br).
  \end{equation}
  A finer analysis of function spaces relating to the existence of the primal and dual solutions does not play a huge role in this work,
  and we will simply include existence as an assumption.
  This is classically a consequence of the direct method of the calculus of variations,
  under appropriate growth and coercivity assumptions. %\todo{CITE}

%  We will simply assume that appropriate coercivity and growth assumptions hold,
%  that guarantee the existence of a primal solution (minimiser) $u\in W^{1,1}_0(\Omega)$ with $I(u) = \min_{v\in \WonepD{1}}I(v)<+\infty$.
  \begin{assumption}[Existence and strong duality]\label{as:existence}
    There exists a function $u\in \WonepD{1}$ (called the primal solution) such that 
    \begin{equation}
      I(u)= \min_{v\in \WonepD{1}}I(v)<+\infty.
    \end{equation}
    There is a function $\bq \in \WdivpN{1}$ (called the dual solution) such that
    \begin{equation}
      D(\bq)= \max_{\br\in \WdivpN{1}}D(\br)>-\infty.
    \end{equation}
  Moreover, strong duality holds (or the absence of a duality gap): 
  \begin{equation}\label{eq:strong_duality}
I(u)=D(\bq).
  \end{equation}
  \end{assumption}
  We remark that often the primal and dual solutions will often belong to smaller spaces;
  for instance, if $|\ba|^p\lesssim \phi(\cdot,\ba)|\lesssim 1+|\ba|^p$ a.e.\ in $\Omega$ with $p\in (1,\infty)$,
  one will typically have $u\in \WonepD{p}$ and $\bq\in \WdivpN{p'}$.

    As a consequence of the Fenchel--Young inequality \eqref{eq:fenchel_young}, a weak duality relation holds:
    \begin{equation}
I(v)\geq D(\br) 
\qquad \forall\, v\in \WonepD{1},\, 
\br\in \WdivpN{1}.
    \end{equation}
    (Here the left- and right-hand side are allowed to take the value $+\infty$ and $-\infty$, respectively.)
    The strong duality relation \eqref{eq:strong_duality} is hence a stronger statement.
    In this case, the following duality relations will be satisfied \cite[Prop.~3.1]{BK.2024}:
    \begin{subequations}\label{eq:optimality_relations}
    \begin{align}
      \bq\cdot \nabla u &= 
\phi(\cdot,\nabla u) 
+ \phi^*(\cdot,\bq)
\qquad \;\;\,\text{a.e. in }\Omega, \label{eq:optimality_relations_phi}\\
      \diver\bq\cdot u &= 
\psi(\cdot, u) 
+ \psi^*(\cdot,\diver\bq)
\qquad \text{a.e. in }\Omega.\label{eq:optimality_relations_psi}
    \end{align}
  \end{subequations}

  \subsection{Uniformly convex $N$-functions}

  An application of particular interest in this work will be the case where $\phi(\cdot,\ba)=\widehat{\phi}(|\ba|)$,
  with $\phiN\colon \RRplus\to\RRplus$ being a uniformly convex $N$-function;
  we follow here the treatment from \cite{DFTW.2020}.

  \begin{definition}[(Uniformly convex) $N$-functions]\label{def:unif_convexity}
A function $\phiN\colon \RRplus \to\RRplus$ is said to be an $N$-function if
\begin{enumerate}
  \item $\phiN$ is continuous and convex;
  \item There is a right-continuous and non-decreasing function $\phiN'\colon \RRplus \to \RRplus$ such that $\phiN(t)=\int_0^t\phiN'(s)\dd s$ ,
    and satisfying also $\phiN'(0)=0$, $\phiN'(s)>0$ for $s>0$, and $\lim_{s\to\infty}\phiN'(s)=\infty$.
\end{enumerate}
The $N$-function $\phiN$ is then said to be \emph{uniformly convex} if there are constants $c_{\mathrm{uc}},C_{\mathrm{uc}}>0$ such that 
\begin{equation}\label{eq:uniform_convexity}
  c_{\mathrm{uc}} \frac{\phiN'(s)-\phiN'(t)}{s-t}
  \leq 
  \frac{\phiN'(s)}{s}
  \leq 
  C_{\mathrm{uc}} \frac{\phiN'(s)-\phiN'(t)}{s-t}
\end{equation}
Given a uniformly convex $N$-function $\phiN$, one defines its conjugate as $\phiN^*(t)\coloneqq \max_{s\geq 0}(st - \phiN(s))$.
The function $\phiN^*$ is in turn also a uniformly convex $N$-function.
  \end{definition}

  A classical example of a uniformly convex $N$-function is $\phiN(s)=\frac{1}{p}(\delta^2 + s^2)^{\frac{p}{2}}-\frac{1}{p}\delta^p$ with $p\in (1,\infty)$ and $\delta\geq 0$,
leading for instance to the usual $p$-Laplace operator when $\delta=0$.
More generally, if we define the nonlinear function $\mathcal{A}\colon \RRd \to \RRd$, and its inverse $\mathcal{A}^{-1}\colon \RRd\to\RRd$ as
\begin{equation}
  \mathcal{A}(\ba) \coloneqq 
  \left\{ 
    \begin{array}{cc}
      \frac{\phiN'(|\ba|)}{|\ba|}\ba & \text{if }\ba\not=0, \\
      0 & \text{if }\ba=0,
    \end{array}
    \right.
    \qquad 
    \mathcal{A}^{-1}(\bb) \coloneqq 
  \left\{ 
    \begin{array}{cc}
      \frac{{\widehat{\phi}^{*}}{'}(|\bb|)}{|\bb|}\bb & \text{if }\bb\not=0, \\
      0 & \text{if }\bb=0,
    \end{array}
    \right.
\end{equation}
then the optimality relation \eqref{eq:optimality_relations_phi} can be recast as 
\begin{equation}\label{eq:optimality_relations_pLaplace}
  \bq = \mathcal{A}(\nabla u) 
  \quad \Longleftrightarrow\quad 
  \nabla u = \mathcal{A}^{-1}(\bq)
  \quad \text{a.e.\ in }\Omega.
\end{equation}
In this setting the (primal) solution $u$ will belong to the Orlicz--Sobolev space $W_D^{1,\phi}(\Omega)$,
and assuming for instance that $\psi(\cdot,a)=f(\cdot)a$, with $f\in (L^{\phi}(\Omega))^*$,
the optimality condition (Euler--Lagrange equation) associated to the primal problem is the so-called $\phi$-Laplace equation:
\begin{equation}\label{eq:phi_laplace}
  \int_\Omega \mathcal{A}(\nabla u)\cdot \nabla v 
  = \int_\Omega fv 
  \qquad \forall\, v\in W_D^{1,\phi}(\Omega).
\end{equation}
In particular we note that in this case Assumption \ref{as:existence} is satisfied (see e.g.\ \cite{DK.2008,DS.2025}).
Evidently, our setting includes also semilinear problems where $\psi(\cdot,a)=\widehat{\psi}(|a|)$, with uniformly convex $\widehat{\psi}$ (in which case $u\in W_D^{1,\phi}(\Omega)\cap L^\psi(\Omega)$).

Now, it is well known that in the analysis of problems of the type \eqref{eq:phi_laplace},
when measuring errors it is crucial to employ a nonlinear notion of distance specifically tailored to the structure of the problem;
this is what is usually known as a \emph{quasi-norm} or \emph{natural distance} \cite{BL.1994,EL.2005,RD.2007,DR.2007,DK.2008}.
A useful tool when working with these notions of distance is that of a \emph{shifted $N$-function} $\phiN_a$ (with a shift $a\geq 0$) associated to an $N$-function $\phiN$.
Following \cite{DFTW.2020} (cf.\ \cite{DE.2008}), the shifted $N$-function is defined in terms of its derivative: 
\begin{equation}\label{eq:shifted_N_function}
\phiN_a(t)\coloneqq \int_0^t \phiN'_a(s)\dd s,
\qquad \text{where} \qquad
\phiN_a'(s)\coloneqq \frac{\phiN'(\max\{a,s\})}{\max\{a,s\}}s 
\qquad s\geq 0.
\end{equation}
Defining 
\begin{equation}
  \mathcal{V}(\ba) \coloneqq 
  \left\{ 
    \begin{array}{cc}
      \sqrt{\frac{\phiN'(|\ba|)}{|\ba|}}\ba & \text{if }\ba\not=0, \\
      0 & \text{if }\ba=0,
    \end{array}
  \right.
\end{equation}
then one can choose as a natural distance between two vectors $\ba,\bb\in \RRd$
any the following equivalent quantities
(the constants here and in the estimates \eqref{eq:triangle_unif_convex}--\eqref{eq:shift_change} below depend only on the uniform convexity constants $c_{\mathrm{uc}},C_{\mathrm{uc}}$ from \eqref{eq:uniform_convexity}):
\begin{equation}\label{eq:natural_distance}
  \phiN_{|\ba|}(|\bb - \ba|)
  \sim 
  (\phiN_{|\ba|})^*(|\mathcal{A}(\bb)-\mathcal{A}(\ba)|)
  \sim
  |\mathcal{V}(\ba)-\mathcal{V}(\bb)|^2
  \sim 
  (\mathcal{A}(\ba)-\mathcal{A}(\bb))\cdot (\ba-\bb).
\end{equation}
Note that in the linear case (i.e.\ when $\phiN(t)=\frac{1}{2}t^2$), these reduce to the usual (squared) Euclidean distance $|\ba-\bb|^2$.

As further useful properties, we note that for the shifted function it holds $(\phiN_a)^*=(\phiN^*)_{\phiN'(a)}$,
and one has that $\phiN_a$ is uniformly convex, whenever $\phiN$ is, with uniform convexity constants independent of the shift $a$.
Moreover, one has the following triangle-type inequality: 
\begin{equation}\label{eq:triangle_unif_convex}
\phiN(|s+t|) \lesssim \phiN(|s|) + \phiN(|t|)
\qquad \forall\, s,t\geq 0.
\end{equation}
There is also a useful variation of the Fenchel--Young inequality:
for $\varepsilon>0$ there is $C_\varepsilon>0$, %(which depends on the uniform convexity constants $c_{uc},C_{uc}$),
such that 
\begin{equation}\label{eq:fenchel_young_eps}
s t \leq \varepsilon \phiN(s) + C_\varepsilon \phiN^*(t)
\quad\forall\, s,t \geq 0.
\end{equation}
%with a constant that depends on $c_{uc},C_{uc}$.
Also very handy are the following shift-change estimates: for any $\varepsilon>0$ there is $C_\varepsilon>0$, such
that for any $\ba,\bb\in\RRd$ and $t\geq 0$ it holds 
\begin{subequations}\label{eq:shift_change}
  \begin{align}
    \phiN_{|\ba|}(t) 
    &\leq (1+C_\varepsilon)\phiN_{|\bb|}(t) 
    + \varepsilon |\mathcal{V}(\ba) - \mathcal{V}(\bb)|^2 \\
%    + \varepsilon \phiN_{|\ba|}(|\bb-\ba|), \\
    (\phiN_{|\ba|})^*(t) 
    &\leq (1+C_\varepsilon)(\phiN_{|\bb|})^*(t) 
%    + \varepsilon \phiN_{|\ba|}(|\bb-\ba|),
    + \varepsilon |\mathcal{V}(\ba) - \mathcal{V}(\bb)|^2.
  \end{align}
\end{subequations}
These estimates also hold with the roles of $\varepsilon$ and $C_\varepsilon$ reversed.

In this work we will employ the (pointwise) Bregman divergence as a main error measure.
The following proposition shows that this is equivalent to the natural quantities in \eqref{eq:natural_distance}, thus guaranteeing  the validity of the Bregman--Young inequality \eqref{eq:bregman_young}.

  \begin{proposition}\label{prop:young_n_function}
    Suppose the integrand $\phi\colon \RRd\to \RRd$ is such that $\phi(\ba)=\phiN(|\ba|)$ for all $\ba\in\RRd$,
    for some uniformly convex $N$-function $\phiN$.
Then the Bregman divergence 
\begin{equation}
  \mathcal{D}_{\phi}(\ba,\bb) 
  \coloneqq 
  \phi(\ba) - \phi(\bb) - \mathcal{A}(\bb)\cdot (\ba-\bb)
\qquad \text{for }\ba,\bb\in\RRd,
\end{equation}
is equivalent to the natural distance from \eqref{eq:natural_distance}, with constants depending only on the uniform convexity constants $c_{\mathrm{uc}},C_{\mathrm{uc}}$ from \eqref{eq:uniform_convexity}.
In particular, $\varphi$ satisfies Assumption \ref{as:bregman_young}.
  \end{proposition}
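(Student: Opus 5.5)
We prove the equivalence $\mathcal{D}_\phi(\ba,\bb)\sim \phiN_{|\bb|}(|\ba-\bb|)$ by a one-dimensional reduction, and then deduce the Bregman--Young inequality from it using the shifted-function tools \eqref{eq:fenchel_young_eps}--\eqref{eq:shift_change}.

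\textbf{Step 1 (Taylor formula).} Since $\phi(\ba)=\phiN(|\ba|)$ is $C^1$ with $\phi'=\mathcal{A}$, set $\ba_\theta\coloneqq\bb+\theta(\ba-\bb)$. Then
\begin{equation*}
\mathcal{D}_\phi(\ba,\bb)=\int_0^1 \big(\mathcal{A}(\ba_\theta)-\mathcal{A}(\bb)\big)\cdot(\ba-\bb)\,\mathrm{d}\theta .
\end{equation*}

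\textbf{Step 2 (integrand bounds).} By \eqref{eq:natural_distance}, applied to the pair $(\ba_\theta,\bb)$, the integrand is
\begin{equation*}
\theta^{-1}\big(\mathcal{A}(\ba_\theta)-\mathcal{A}(\bb)\big)\cdot(\ba_\theta-\bb)\sim \theta^{-1}\phiN_{|\bb|}(\theta|\ba-\bb|).
\end{equation*}
It therefore suffices to integrate $\theta^{-1}\phiN_{|\bb|}(\theta t)$ over $\theta\in(0,1)$, with $t=|\ba-\bb|$.

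\textbf{Step 3 (integration in $\theta$).} Because $\phiN_{|\bb|}$ is uniformly convex uniformly in the shift, it satisfies $\Delta_2$ and $\nabla_2$ conditions with shift-independent constants. This gives $\theta^{q}\phiN_{|\bb|}(t)\lesssim\phiN_{|\bb|}(\theta t)\le \theta\,\phiN_{|\bb|}(t)$ for some $q>1$.
\begin{itemize}
\item The upper bound yields $\mathcal{D}_\phi(\ba,\bb)\lesssim\phiN_{|\bb|}(t)$.
\item The lower bound yields $\int_0^1\theta^{q-1}\,\mathrm{d}\theta\,\phiN_{|\bb|}(t)\gtrsim\phiN_{|\bb|}(t)$.
\end{itemize}
This step is the main technical obstacle: the powers $q$ must be extracted from \eqref{eq:uniform_convexity} independently of the shift. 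As an alternative, one could avoid this by comparing directly with $\int_0^t\phiN_{|\bb|}'(s)\,\mathrm{d}s$ via monotonicity of $\phiN'(s)/s$ up to constants, which follows from \eqref{eq:uniform_convexity}. The same argument applies to $\phi^*$, using $\phi^*(\br)=\phiN^*(|\br|)$ and the fact that $\phiN^*$ is again uniformly convex.

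\textbf{Step 4 (Bregman--Young).} With $\ba_2=\mathcal{A}^{-1}(\br_2)$, set $t=|\ba_1-\ba_2|$ and $s=|\br_1-\br_2|$. Apply \eqref{eq:fenchel_young_eps} with $\varepsilon=1$ to the shifted function $\phiN_{|\ba_2|}$; its constants are shift-independent. This gives
\begin{equation*}
ts\lesssim \phiN_{|\ba_2|}(t)+(\phiN_{|\ba_2|})^*(s)=\phiN_{|\ba_2|}(t)+(\phiN^*)_{|\br_2|}(s),
\end{equation*}
using $(\phiN_a)^*=(\phiN^*)_{\phiN'(a)}$ and $\phiN'(|\ba_2|)=|\br_2|$. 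By Step 3 applied to $\phi$ and to $\phi^*$, the right-hand side is $\sim\mathcal{D}_\phi(\ba_1,\ba_2)+\mathcal{D}_{\phi^*}(\br_1,\br_2)$. Since $\phi$ is differentiable, the subgradients are the unique gradients, so this is exactly Assumption \ref{as:bregman_young}.
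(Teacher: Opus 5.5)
Your proposal is correct and follows the paper's route. Your Step~4 is exactly the paper's argument for the Bregman--Young inequality: Fenchel--Young for the shifted function $\phiN_{|\ba_2|}$, the identity $(\phiN_{|\ba_2|})^*=(\phiN^*)_{|\br_2|}$, and then the equivalence. For the equivalence itself the paper only defers to Lemma~42 of \cite{DFTW.2020}, and your Steps~1--3 (Taylor formula, then integrating $\theta^{-1}\phiN_{|\bb|}(\theta t)$ using convexity and the shift-uniform $\Delta_2$ bound) make explicit the standard argument behind that citation.
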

  \begin{proof}
    The stated equivalence was essentially already proved in \cite[Lem.~42]{DFTW.2020},
    where the focus was placed on the case where $\mathcal{A}(\bb)=0$,
    in which case the energy difference $\phi(\ba)-\phi(\bb)$ is equivalent to the natural distance.
   However, the same proof yields the claimed equivalence above.

   To verify Assumption \ref{as:bregman_young}, take arbitrary $\ba_1,\ba_2,\br_1,\br_2\in \RRd$ such that $\br_2=\mathcal{A}(\ba_2)$.
   Combining the Fenchel--Young inequality \eqref{eq:fenchel_young} for the shifted $N$-function $\phiN_{|\ba_2|}$ with the fact that $(\phiN_{|\ba_2|})^* = \phiN^*_{\phiN'(|\ba_2|)}=\phiN^*_{|\br_2|}$, we obtain:
\begin{align*}
 | (\ba_1-\ba_2)\cdot (\br_1 - \br_2)| 
  &\leq 
 (\phiN_{|\ba_2|})^*(|\br_1 - \br_2|)
  +
 \phi_{|\ba_2|}(|\ba_1 - \ba_2|)
  \\
  & =
 \phi^*_{|\br_2|}(|\br_1 - \br_2|)
  +
 \phi_{|\ba_2|}(|\ba_1 - \ba_2|)
  \\ &\lesssim 
  \mathcal{D}_{\phi}(\ba_1, \ba_2)
  +
  \mathcal{D}_{\phi^*}(\br_1, \br_2),
\end{align*}
where in the last line we simply applied the equivalence above.
  \end{proof}

  \subsection{Finite element approximation}\label{sec:FE}
We will consider in this paper a family of conforming shape-regular triangulations $\{\mathcal{T}_h\}_{h>0}$ of $\Omega$; we will assume that the boundary facets from are fully contained in either $\Gamma_D$ or $\Gamma_N$.
The set of facets in the triangulation will be denoted by $\facets$, which can be decomposed  into sets if interior $\facetsint$ and boundary facets $\facetsbdry$;
the set of boundary facets can in turn be decomposed $\facetsbdry=\facetsD\cup\facetsN$ into Dirichlet $\facetsD$ and Neumann $\facetsN$ facets.
We will denote by $\mathbb{P}_k(T)$ the space of polynomials of degree at most $k\in\mathbb{N}$ on a given element $T\in\triang$.
Define also the \emph{broken Sobolev spaces} for $p\in[1,\infty]$:
\begin{align}
  \Wonepbroken{p} &\coloneqq 
  \{v_h \in L^p(\Omega) \mid v_h|_T\in W^{1,p}(T) \text{ for all }T\in\triang\},\\
  \Wdivbroken{p} &\coloneqq
  \{\br_h \in L^p(\Omega)^d \mid \br_h|_T\in W^{p}(\diver;T) \text{ for all }T\in\triang\}.
\end{align}
For each $v_h\in \Wonepbroken{p}$ we denote by $\nabla_h v_h \in L^p(\Omega)^d$ the \emph{broken gradient},
which is characterised by $(\nabla_h v_h)|_T = \nabla(v_h|_T)$, for all $T\in\triang$.
The space of \emph{broken polynomials of degree at most $k$} is then defined as:
\begin{equation}
  \mathbb{P}_k(\triang) \coloneqq 
  \{v_h \in L^\infty(\Omega)\mid v_h|_T\in \mathbb{P}_k(T)\text{ for all }T\in\triang\}.
\end{equation}
The local $L^2$-orthogonal projection onto piecewise constant fields $\Pi_h\colon L^1(\Omega)\to\mathbb{P}_0(\triang)$ is defined by the relation 
\begin{equation}
  \Pi_h v_h|_T \coloneqq \frac{1}{|T|}\int_T v_h \qquad 
  \forall\, T\in \triang.
\end{equation}
Denoting the midpoint of a facet $F\in \facets$ by $x_F$, the \emph{Crouzeix--Raviart space} can be defined as:
\begin{equation*}
V^h \coloneqq 
\{v_h \in \mathbb{P}_1(\triang)\mid v_h \text{ is continuous at }x_F \text{ for all }F\in\facets \}.
\end{equation*}
Similarly, if $x_T$ denotes the barycenter of an element $T\in\triang$, the lowest-order \emph{Raviart--Thomas} space can be defined as:
\begin{equation*}
\Sigma^h\coloneqq 
\{\br_h\in W^1(\diver;\Omega)\mid \br_h|_T(x) = a_T + b_T(x-x_T),
\text{ with }a_T\in\RRd,\,b_T\in\RR,\text{ for all }T\in\triang\}.
\end{equation*}
In turn, the spaces with homogeneous boundary conditions imposed are denoted by 
\begin{align}
  \CRDirichlet &\coloneqq 
\{v_h\in V^h \mid v_h(x_F)=0\text{ for all }F\in \facetsD\},\\
\RTNeumann &\coloneqq 
\{\br_h\in \Sigma^h \mid \br_h|_F\cdot \bn_F = 0 \text{ for all }F\in\facetsN\}.
\end{align}
These spaces satisfy remarkable compatibility conditions;
for instance, the following useful integration by parts formula holds \cite{BW.2021}:
\begin{equation}\label{eq:int_by_parts_discrete}
  (\diver \br_h , v_h)_\Omega 
  + (\br_h,\nabla_h v_h)_\Omega = 0
  \qquad \forall\, \br_h\in \RTNeumann,\, v_h\in\CRDirichlet.
\end{equation}

  \section{Local efficiency of the gap estimator}\label{sec:local_efficiency}
For a given normal integrand $\phi\colon \Omega \times \RRd\to \Rext$,
we define its associated convexity error estimator as 
\begin{equation}
\gapphi(\bp,\br)
\coloneqq 
  \int_\Omega \left[ \phi(\cdot,\bp) - \br\cdot \bp + \phi^*(\cdot,\br) \right],
\end{equation}
where $\bp,\bq\in L^1(\Omega)^d$; note that the estimator is allowed to take the value $+\infty$ if e.g.\ $\phi(\cdot,\bp)$ is not integrable,
but thanks to the Fenchel--Young inequality, it is always non-negative.
In fact, by the same token 
the integrand is pointwise non-negative and so the estimator can be written in terms of local non-negative contributions:
  \begin{gather*}
    \gapphi(\bp,\br) = \sum_{T\in \triang}  \gapphiT(\bp,\br) \\ 
    \gapphiT(\bp,\br) \coloneqq 
  \int_T \left[ \phi(\cdot,\bp) - \br\cdot \bp + \phi^*(\cdot,\br) \right]
  \end{gather*}
  With this at hand, we can now introduce the duality gap estimator as 
  \begin{equation}\label{eq:gap_estimator}
    \begin{aligned}
      \gap(v,\br) &\coloneqq 
      \gapphi(\nabla v,\br) + \gappsi(v,\diver\br) \\
                  &=
  \int_\Omega \left[ \phi(\cdot,\nabla v) - \br\cdot \nabla v + \phi^*(\cdot,\br) \right]
  +
  \int_\Omega \left[ \psi(\cdot, v) - \diver\br \, v + \psi^*(\cdot,\diver\br) \right],
    \end{aligned}
  \end{equation}
and its corresponding local version:
  \begin{equation}\label{eq:gap_estimator_local}
 %   \begin{aligned}
      \gapT(v,\br) \coloneqq 
      \gapphiT(\nabla v,\br) + \gappsiT(v,\diver\br),
      %\\                  &=
%  \int_\Omega \left[ \phi(\cdot,\nabla v) - \br\cdot \nabla v + \phi^*(\cdot,\br) \right]
%  +
%  \int_\Omega \left[ \psi(\cdot, v) - \diver\br \, v + \psi^*(\cdot,\diver\br) \right],
 %   \end{aligned}
  \end{equation}
  where $v\in \WonepD{1}$ and $\br\in \WdivpN{1}$.
  Note that by \eqref{eq:equality_young}, the estimator vanishes if and only if the optimality conditions \eqref{eq:optimality_relations} hold, meaning that $v$ and $\br$ are in fact the solution of the problem.

%  Now, it is a classical observation that the strong duality relation \eqref{eq:strong_duality} implies the following identity for arbitrary $v\in \WonepD{1}$ and $\br\in \WdivpN{1}$ (see e.g.\ \cite{BK.2024}): 
%  \begin{equation}
%    \left[I(v)-I(u)\right] + 
%    \left[D(\bq)-D(\br)\right]
%    = \gap(v,\br).
%  \end{equation}
%  Since the energy differences on the left-hand-side constitute a natural way of defining the error,
%  this constitutes an a posteriori error identity that is reliable and efficient with constant 1.

  As mentioned in the introduction, strong duality yields the a posteriori error identity \eqref{eq:error_identity_intro},
  which, while useful and elegant, is based on a global error measure. 
  In order to localise we will make use of error measures based on Bregman divergences:
  \begin{subequations}\label{eq:convexity_measures}
    \begin{gather}
\rhotot(v,\br) \coloneqq 
\rhoprimal(v)
+ \rhodual(\br)
\\ 
\rhoprimal(v) \coloneqq 
\int_\Omega \mathcal{D}_\varphi(\nabla v, \nabla u) 
+ \int_\Omega \mathcal{D}_\psi(v,u)
\\
\rhodual(\br) \coloneqq 
\int_\Omega \mathcal{D}_{\varphi^*}(\br,\bq)
+ \int_\Omega \mathcal{D}_{\psi^*}(\diver \br, \diver \bq)
\end{gather}
  \end{subequations}
  Here for simplicity we ommit the superindices associated to the subgradients: $\mathcal{D}^{\bq}_{\varphi}(\nabla v,\nabla u) =\mathcal{D}_\phi(\nabla v,\nabla u)$, $\mathcal{D}^{\diver \bq}_{\psi}(v,u)=\mathcal{D}_\psi(v,u)$,
  $\mathcal{D}^{\nabla u}_{\phi^*}(\br,\bq)=\mathcal{D}_{\phi^*}(\br,\bq)$,
  $\mathcal{D}^{u}_{\psi^*}(\diver\br,\diver\bq)=\mathcal{D}_{\psi^*}(\diver\br,\diver\bq)$.
  These are natural error measures that now involve pointwise non-negative integrands,
  thus making it possible to write down localised estimates.
  We first verify that indeed the error identity \eqref{eq:error_identity_intro} can be re-written in terms of these error measures. 
  
  \begin{proposition}[Generalised Prager--Synge identity]\label{prop:identity_bregman}
    Suppose that Assumption \ref{as:existence} is satisfied.
    Then the following error identity holds for all $v\in \WonepD{1}$ and $\WdivpN{1}$:
    \begin{equation}
\rhotot(v,\br)
= \gap(v,\br).
    \end{equation}
  \end{proposition}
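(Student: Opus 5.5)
The identity will follow from a pointwise computation based on the ``sharing of subgradients'' property in Lemma \ref{lem:bregman}, combined with the optimality relations \eqref{eq:optimality_relations} and the integration by parts built into the definition of $\WdivpN{1}$. I will not go through the energy identity \eqref{eq:error_identity_intro}. Instead I will expand each Bregman divergence in $\rhotot$ directly.

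\textbf{Step 1: rewrite each divergence via Lemma \ref{lem:bregman}(3).} By \eqref{eq:optimality_relations_phi} and \eqref{eq:equality_young}, $\bq\in\partial\phi(\cdot,\nabla u)$ a.e. Lemma \ref{lem:bregman}(3) then gives, pointwise a.e.,
\[
\mathcal{D}_\phi(\nabla v,\nabla u)=\phi(\cdot,\nabla v)+\phi^*(\cdot,\bq)-\bq\cdot\nabla v .
\]
In the same way $\nabla u\in\partial\phi^*(\cdot,\bq)$ and $\phi^{**}=\phi$, since $\phi(x,\cdot)$ is lower semicontinuous for a normal integrand. Hence
\[
\mathcal{D}_{\phi^*}(\br,\bq)=\phi^*(\cdot,\br)+\phi(\cdot,\nabla u)-\br\cdot\nabla u .
\]
Using \eqref{eq:optimality_relations_psi}, the two $\psi$-divergences are expanded analogously:
\[
\mathcal{D}_\psi(v,u)=\psi(\cdot,v)+\psi^*(\cdot,\diver\bq)-v\,\diver\bq,
\qquad
\mathcal{D}_{\psi^*}(\diver\br,\diver\bq)=\psi^*(\cdot,\diver\br)+\psi(\cdot,u)-u\,\diver\br .
\]

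\textbf{Step 2: sum and regroup.} Adding the four expressions gives the integrand of $\gap(v,\br)$ plus a remainder
\[
\bigl[\phi(\cdot,\nabla u)+\phi^*(\cdot,\bq)-\bq\cdot\nabla u\bigr]
+\bigl[\psi(\cdot,u)+\psi^*(\cdot,\diver\bq)-u\,\diver\bq\bigr]
+R,
\]
where
\[
R=\bq\cdot\nabla u+u\,\diver\bq-\bq\cdot\nabla v-v\,\diver\bq-\br\cdot\nabla u-u\,\diver\br+\br\cdot\nabla v+v\,\diver\br .
\]
The two bracketed terms vanish a.e. by \eqref{eq:optimality_relations}. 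The remainder factors as
\[
R=(\br-\bq)\cdot\nabla(v-u)+(\diver\br-\diver\bq)(v-u).
\]
Integrating over $\Omega$ kills $R$, because $v-u\in\WonepD{1}$ and $\br-\bq\in\WdivpN{1}$, so the defining relation \eqref{eq:WdivN} applies. This yields $\rhotot(v,\br)=\gap(v,\br)$.

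\textbf{Main obstacle: integrability and the $+\infty$ cases.} This is bookkeeping rather than a conceptual difficulty, and I would handle it as follows. If $\gap(v,\br)=+\infty$ or $\rhotot(v,\br)=+\infty$, the regrouping must not manufacture an $\infty-\infty$. Since each integrand in both sums is pointwise nonnegative, the identity is first established pointwise a.e., as $\text{(Bregman sum)}=\text{(gap integrand)}+R$, and one then integrates.

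To justify integrating $R$ separately, I would first treat the case where both sides are finite, or argue that finiteness of either side forces $\phi(\cdot,\nabla v)$ and $\phi^*(\cdot,\br)$ to be integrable, so that the products in $R$ are integrable. This uses the Fenchel--Young inequality: for instance $|\br\cdot\nabla u|$ is controlled through $\phi^*(\cdot,\br)+\phi(\cdot,\nabla u)$, and similarly for the other products. I would also note that \eqref{eq:WdivN} as written requires $\WonepD{p}$ and $W^{p'}(\diver)$ pairings. I would therefore interpret the duality pairing in the $L^1$ setting of Assumption \ref{as:existence}, taking the defining relation to hold for the relevant $v-u$ and $\br-\bq$. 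Otherwise both sides are $+\infty$, and the identity holds trivially in the extended sense.
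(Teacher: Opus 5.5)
Your core computation is correct, but it is organised differently from the paper's proof.

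\textbf{How the two routes differ.} The paper expands each Bregman divergence by its definition, e.g.\ $\phi(\cdot,\nabla v)-\phi(\cdot,\nabla u)-\bq\cdot(\nabla v-\nabla u)$, and integrates. It then cancels $\int_\Omega[\phi(\cdot,\nabla u)+\psi(\cdot,u)+\phi^*(\cdot,\bq)+\psi^*(\cdot,\diver\bq)]$ using the \emph{global} relation $I(u)=D(\bq)$. The remaining bilinear terms are removed with separate Green formulas for the pairs $(v,\bq)$, $(u,\br)$, $(u,\bq)$ and $(v,\br)$. This presupposes that all these integrals are individually finite, which is what the paper's ``without loss of generality'' remark covers. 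You instead use the \emph{pointwise} relations \eqref{eq:optimality_relations} and Lemma~\ref{lem:bregman}(3) to get the a.e.\ identity
\[
\text{Bregman integrand}=\text{gap integrand}+(\br-\bq)\cdot\nabla(v-u)+(\diver\br-\diver\bq)(v-u).
\]
This is the same pointwise identity the paper later uses in the proof of Theorem~\ref{thm:local_efficiency}.

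\textbf{What each buys.} The paper's computation needs only $I(u)=D(\bq)$: no a.e.\ relations and no $\phi^{**}=\phi$. Yours needs those, but in exchange requires only one Green formula, for the combined pair $(v-u,\br-\bq)$. It also handles infinite values cleanly: both integrands are non-negative, so the identity holds in $[0,+\infty]$ as soon as $R\in L^1(\Omega)$ and $\int_\Omega R=0$.

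\textbf{Your integrability paragraph needs correcting.} Three of its claims fail.
\begin{enumerate}
\item Fenchel--Young bounds $\br\cdot\nabla u$ only from above, not $|\br\cdot\nabla u|$, unless $\phi(x,\cdot)$ is even.
\item Finiteness of $\gap(v,\br)$ does not make $\phi(\cdot,\nabla v)$ or $\phi^*(\cdot,\br)$ integrable. For $\phi=\frac12|\cdot|^2$ it only gives $\nabla v-\br\in L^2$.
\item It is not true that ``otherwise both sides are $+\infty$''.
\end{enumerate}
The Green formula for $(v-u,\br-\bq)$ is therefore a genuine hypothesis, not bookkeeping. Here is a counterexample. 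Take $\phi=\frac12|\cdot|^2$ and $f=0$ (so $u=0$, $\bq=0$) on a nonconvex polygon. Let $v_0\in W^{1,1}_0(\Omega)\setminus H^1(\Omega)$ be harmonic, for instance a cut-off reentrant-corner singularity $\varrho^{-\pi/\omega}\sin(\pi\theta/\omega)$ corrected by an $H^1_0$ function. Then $v_0\in\WonepD{1}$ and $\nabla v_0\in\WdivpN{1}$; Green's formula against $W^{1,\infty}_0(\Omega)$ holds because $v_0$ is harmonic. Yet $\gap(v_0,\nabla v_0)=0$ while $\rhotot(v_0,\nabla v_0)=\int_\Omega|\nabla v_0|^2=+\infty$.

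The correct way to close your argument is to assume $R\in L^1(\Omega)$ with $\int_\Omega R=0$, i.e.\ the Green formula for this pair. Do not try to derive it from finiteness of either side. This is also what the paper tacitly assumes through its ``without loss of generality'' remark and the $\WonepD{p}$--$\WdivpN{p'}$ pairing.
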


  \begin{proof}
We start with the left-hand-side and apply the strong duality relation \eqref{eq:strong_duality}:
\begin{align*}
\rhotot(v,\br)
&= \int_\Omega \left[\phi(\nabla v) - \phi(\nabla u) - \bq\cdot(\nabla v - \nabla u) \right]
+ \int_\Omega \left[\psi(v) - \psi(u) - \diver\bq\cdot(v -u) \right]
\\ &\quad + 
 \int_\Omega \left[\phi^*(\br) - \phi^*(\bq) - \nabla u \cdot(\br - \bq) \right]
+ \int_\Omega \left[\psi^*(\diver \br) - \psi^*(\diver \bq) - u\cdot(\diver\br -\diver\bq) \right]
 \\&= 
\gap(v,\br),
\end{align*}
where in the last line we used integration by parts, noticing that no boundary terms appear thanks to the homogeneous boundary conditions.
Without loss of generality we assumed that the integrals in question are finite, so that these computations are meaningful;
for instance, for $\varphi$ with $p$-growth for $p\in(1,\infty)$, we exploit $\int_\Omega \br\cdot \nabla v = - \int_\Omega \diver \br v $ for all $\br\in W^{p'}_N(\diver;\Omega)$, $v\in W^{1,p}_D(\Omega)$, and so forth.
\end{proof}

%  Consider the duality gap estimator \textcolor{morado}{[defined for?]}
%  \begin{equation}\label{eq:gap_estimator}
%  \gap(v,\br) \coloneqq 
%  \int_\Omega \left[ \phi(\cdot,\nabla v) - \br\cdot \nabla v + \phi^*(\cdot,\br) \right]
%  +
%  \int_\Omega \left[ \psi(\cdot, v) - \diver\br \, v + \psi^*(\cdot,\diver\br) \right]
%  \end{equation}
%  This estimator can be written in terms of local non-negative contributions:
%  \begin{gather*}
%    \gap(v,\br) = \sum_{T\in \triang}  \gapT(v,\br) \\ 
%    \gapT(v,\br) \coloneqq 
%  \int_T \left[ \phi(\cdot,\nabla v) - \br\cdot \nabla v + \phi^*(\cdot,\br) \right]
%  +
%  \int_T \left[ \psi(\cdot, v) - \diver\br \, v + \psi^*(\cdot,\diver\br) \right]
%  \end{gather*}
We now proceed to prove the local efficiency of $\gapT$ with respect to a localised version of the error measure $\rhotot$.

\begin{theorem}[Local efficiency of the gap estimator]\label{thm:local_efficiency}
  Suppose that Assumption \ref{as:existence} holds, and that the integrands $\phi$ and $\psi$ satisfy Assumption \ref{as:bregman_young}.
  Then the duality gap estimator is locally efficient; more precisely, one has for all $v\in \WonepD{1}$ and $\br\in \WdivpN{1}$:
  \begin{equation}\label{eq:local_efficiency}
  \gapT(v,\br) \lesssim \int_T \left[ \mathcal{D}_\phi(\nabla v,\nabla u)
+ 
\mathcal{D}_{\psi}(v,u)
+
\mathcal{D}_{\phi^*}(\br,\bq)
+
\mathcal{D}_{\psi^*}(\diver\br,\diver\bq)\right],
\end{equation}
where the constant depends only on the constant of the Bregman--Young inequality \eqref{eq:bregman_young}.
\end{theorem}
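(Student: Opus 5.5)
The plan is to prove the estimate pointwise and then integrate over $T$. Fix a.e.\ $x\in T$ and look at the integrand of $\gapphiT(\nabla v,\br)$, namely $\phi(\nabla v)-\br\cdot\nabla v+\phi^*(\br)$. The optimality relation \eqref{eq:optimality_relations_phi} says $\phi(\nabla u)+\phi^*(\bq)=\bq\cdot\nabla u$, so subtracting it changes nothing. I then add and subtract the linear terms $\bq\cdot(\nabla v-\nabla u)$ and $\nabla u\cdot(\br-\bq)$, which give the Bregman divergences with the subgradients fixed in the paper. This leads to the pointwise algebraic identity
\begin{equation*}
\phi(\nabla v)-\br\cdot\nabla v+\phi^*(\br)
=\mathcal{D}_\phi(\nabla v,\nabla u)+\mathcal{D}_{\phi^*}(\br,\bq)-(\nabla v-\nabla u)\cdot(\br-\bq).
\end{equation*}
To check it, expand the right-hand side. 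The terms $\bq\cdot\nabla v$ and $\nabla u\cdot\br$ cancel against the cross term. What remains is $\phi(\nabla v)+\phi^*(\br)-\br\cdot\nabla v-[\phi(\nabla u)+\phi^*(\bq)-\bq\cdot\nabla u]$, and the bracket vanishes by the optimality relation.

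The same computation, now using \eqref{eq:optimality_relations_psi}, gives the analogous identity for the $\psi$-integrand. There the arguments are $v$, $u$, $\diver\br$ and $\diver\bq$, the cross term is $-(v-u)(\diver\br-\diver\bq)$, and the Bregman divergences are taken with subgradients $\diver\bq\in\partial\psi(u)$ and $u\in\partial\psi^*(\diver\bq)$. These inclusions hold by \eqref{eq:equality_young}.

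Next I bound the cross terms using Assumption~\ref{as:bregman_young}. For the $\phi$-term I take $\ba_1=\nabla v$, $\ba_2=\nabla u$, $\br_1=\br$, $\br_2=\bq$, with $\bp=\bq$ and $\bb=\nabla u$. The required duality relation $\nabla u\in\partial\phi^*(\bq)$ is exactly \eqref{eq:optimality_relations_phi}. This gives
\begin{equation*}
|(\nabla v-\nabla u)\cdot(\br-\bq)|\lesssim\mathcal{D}_\phi(\nabla v,\nabla u)+\mathcal{D}_{\phi^*}(\br,\bq).
\end{equation*}
I do the same for $\psi$, with $\ba_1=v$, $\ba_2=u$, $\br_1=\diver\br$, $\br_2=\diver\bq$. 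Plugging both bounds into the identities, each local integrand is bounded by $(1+C_{\mathrm{BY}})$ times the sum of the Bregman divergences, pointwise a.e. Integrating over $T$ and adding the $\phi$ and $\psi$ parts gives \eqref{eq:local_efficiency}, with a constant depending only on the Bregman--Young constant.

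The only delicate point is admissibility, which I expect to be the main obstacle. The Assumption must be applied to points in the right domains, namely $\nabla v(x)\in\dom\phi(x,\cdot)$, $\br(x)\in\dom\phi^*(x,\cdot)$, and similarly for $\psi$. If any of these fails on a set of positive measure in $T$, then both sides are $+\infty$ and the inequality is trivial. Otherwise all quantities are finite a.e., and since the argument is purely pointwise it needs no integration by parts. I would state this reduction explicitly at the start.
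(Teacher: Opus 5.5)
Your proposal is correct and follows the paper's proof: the same add-and-subtract of $\phi^*(\cdot,\bq)$ and $\nabla v\cdot\bq$, justified by the optimality relation, produces $\mathcal{D}_\phi(\nabla v,\nabla u)+\mathcal{D}_{\phi^*}(\br,\bq)$ minus the cross term $(\nabla v-\nabla u)\cdot(\br-\bq)$, which Assumption~\ref{as:bregman_young} then controls, with $\psi$ treated analogously. Doing this pointwise before integrating, as you propose, makes explicit the finiteness bookkeeping that the paper's integrated computation (e.g.\ splitting off $\int_T\phi^*(\cdot,\bq)$) uses tacitly.
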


\begin{proof}
%Denote the first and second term in $\eta^2_{\mathrm{gap};T}(v,\br)$ by $\eta^2_{\phi;T}(v,\br)$
%and $\eta^2_{\psi;T}(v,\br)$, respectively.
We only carry out the estimate corresponding to $\phi$; the one for $\psi$ is analogous.
From Lemma \ref{lem:bregman}\ref{lem:bregman_sharing_subgradients} and the optimality condition $\nabla u \in \partial \phi^*(\cdot,\bq)$ it follows that
\begin{align*}
  \eta^2_{\phi;T}(\nabla v, \br)
  &=
  \eta^2_{\phi;T}(\nabla v, \br)
  \pm \int_T \phi^*(\cdot, \bq) 
  \mp \int_T \nabla v \cdot \bq
\\&
=
\int_T \left[
  \mathcal{D}_{\phi}(\nabla v, \nabla u)
  - \nabla v\cdot(\br-\bq)
  + \phi^*(\cdot,\br)
  -\phi^*(\cdot,\bq)
\right]
\\&
=
\int_K \left[
  \mathcal{D}_{\phi}(\nabla v, \nabla u)
  +
  \mathcal{D}_{\phi^*}(\br, \bq)
  - (\nabla v-\nabla u)\cdot(\br-\bq)
\right]
\end{align*}
An application of the Bregman--Young inequality \eqref{eq:bregman_young} proves the claim.
\end{proof}

\begin{remark}
  From the proof of \ref{thm:local_efficiency}, it is evident that in fact one always has 
%  \commentalexei{is it possible to check this with an $\varepsilon$?}
  \begin{align}
0 \leq \phi(\cdot, \ba) - \bs\cdot \ba + \phi^*(\cdot, \bs)
=
-(\ba - \bb)\cdot (\bs - \bt)
+ \mathcal{D}_\phi(\ba,\bb)
+ \mathcal{D}_{\phi^*}(\bs,\bt)
  \end{align}
  whenever $\bb \in \partial\phi^*(\cdot,\bt)$,
  which means that Assumption \ref{as:bregman_young} is needed only to ensure the other direction:
  \begin{equation}
-(\ba-\bb)\cdot(\bs-\bt)
\lesssim
 \mathcal{D}_\phi(\ba,\bb)
+ \mathcal{D}_{\phi^*}(\bs,\bt)
  \end{equation}
\end{remark}

%\begin{corollary}
%  \todo{Prove the same with strong monotonicity and modified error measure?}
%\end{corollary}

\begin{remark}
In practice often $v$ is taken as a numerical approximation of the primal problem $u_h$,
and $\br$ is constructed through some flux equilibration procedure; see e.g.\ \cite{BS.2008,EV.2013,EV.2015} and references therein.
For these reconstruction procedures, it is possible to prove for particular cases a counterpart of the local bound \eqref{eq:local_efficiency} that only involves primal errors, (see for instance also \cite{EGSV.2022}).
We expect that a similar analysis can be carried out for the nonlinear systems considered in this work, by relying on specific flux equilibration procedures.
\end{remark}

%\newpage
  \section{A posteriori bound for non-conforming functions}
  One difficulty with a framework based on energy functionals is that the energy is in principle defined for functions belonging to a space like $\Wonepzero{p}$ (for integrands with $p$-growth for instance),
  and is usually set to $+\infty$ for non-conforming functions.
  How to derive estimates for non-conforming functions is therefore not immediately obvious.
However,
since the energies under consideration here are defined in terms of  integrands,
it is possible to consider modified energies that are defined on non-conforming spaces and then derive error bounds by working in a pointwise-manner.

To this end, we define the extended primal energy, defined on the broken Sobolev space $\Wonepbroken{1}$:
\begin{equation}
\begin{gathered}
  \overline{I}\colon \Wonepbroken{1} \to \Rext  \\
  \overline{I}(v) \coloneqq 
  \int_\Omega \varphi(\cdot,\discgrad v)
  + \int_\Omega \psi(\cdot,  v),
\end{gathered}
\end{equation}
where $\discgrad\colon \Wonepbroken{1}\to L^{1}(\Omega)^d$ 
 represents a discrete approximation of the gradient operator.
 This could be for instance a DG gradient \cite{BO.09} or simply the broken (piecewise) gradient $\discgrad = \nabla_h$, % $\discdiv=\diver_h$,
but could as well represent an approximation derived from other numerical methods.
We will assume that this approximate gradient coincides with the usual gradient when conforming functions are involved, 
i.e.\ $\discgrad v = \nabla v$ for all $v\in W^{1,1}(\Omega)$.

The energy $\overline{I}$ is defined for a wider range of functions;
note that a function $v$ is admissible as long as pointwise admissibility holds: $\discgrad v \in \dom\varphi$ and $\discdiv v \in \dom\psi$.
At the same time, it will still allow the derivation of useful estimates,
since both energies $I$ and $\overline{I}$ involve \emph{the same} convex integrands $\varphi$ and $\psi$.
We stress that it is not assumed that the energy $\overline{I}$ has a minimiser;
it is merely a tool for deriving error bounds.

In a similar way we define a modified dual energy on the broken space $\Wdivbroken{1}$:
\begin{equation}
\begin{gathered}
  \overline{D}\colon \Wdivbroken{1} \to \Rextm  \\
  \overline{D}(\br) \coloneqq 
  - \int_\Omega \varphi^*(\cdot,\br)
  - \int_\Omega \psi^*(\cdot, \discdiv \br),
\end{gathered}
\end{equation}
where now $\discdiv\colon \Wdivbroken{1}\to L^{1}(\Omega)$ represents some discrete approximation of the divergence operator.
Similarly to the gradient operator, we assume that $\discdiv \br = \diver \br$ for all conforming $\br\in W^{1}(\diver;\Omega)$.

Similarly to the conforming case,
a simple consequence of strong duality $I(u)=D(\bq)$ is the following identity:
\begin{equation}
  \overline{I}(v)-I(u) 
  + D(\bq) -  \overline{D}(\br)
  = \overline{I}(v) - \overline{D}(\br),
\end{equation}
which holds for any $v\in \Wonepbroken{1}$ and $\br\in \Wdivbroken{1}$ such that $\discgrad v\in \dom \varphi$, $v\in \dom\psi$, $\br\in \dom \varphi^*$ and $\discdiv \br \in \dom\psi^*$ a.e.\ in $\Omega$.
While it seems at first glance an error identity with a similar structure to the conforming one,
one issue is that for instance $\overline{I}(v)-I(u)$ is not necessarily non-negative (since $u$ is not necessarily a minimiser of the modified energy) and therefore cannot represent an error.
However, after a reformulation in terms of Bregman divergences, 
this relation will lead to a meaningful a posteriori estimate.
The following lemma will shed light on this relationship;
for this let us define the extended gap estimator $\gapext \colon \Wonepbroken{1}\times \Wdivbroken{1}\to \Rext$ as
  \begin{equation}\label{eq:gap_estimator_ext}
    \gapext(v,\br) \coloneqq
\int_\Omega \left[ \varphi(\cdot,\discgrad v) 
  - \br\cdot \discgrad v
 + \phi^*(\cdot,\br)
\right]
+
\int_\Omega \left[ \psi(\cdot,v) 
  -v\, \discdiv \br  
 + \psi^*(\cdot,\discdiv\br)
\right],
  \end{equation}
  and the generalised notions of  error (similarly to \eqref{eq:convexity_measures}, we ommit the superindices corresponding to the subgradients):
  \begin{subequations}\label{eq:extended_convexity_measures}
    \begin{gather}
\rhoext(v,\br) \coloneqq 
\rhoprimalext(v)
+ \rhodualext(\br)
\\ 
\rhoprimalext(v) \coloneqq 
\int_\Omega \mathcal{D}_\varphi(\discgrad v, \nabla u) 
+ \int_\Omega \mathcal{D}_\psi(v,u)
\\
\rhodualext(\br) \coloneqq 
\int_\Omega \mathcal{D}_{\varphi^*}(\br,\bq)
+ \int_\Omega \mathcal{D}_{\psi^*}(\discdiv \br, \diver \bq)
\end{gather}
  \end{subequations}

\begin{lemma}
Then one has:
\begin{equation}
  \overline{I}(v) - \overline{D}(\br)
  = \gapext(v,\br)
     + (\discdiv \br, v)_\Omega + (\br,\discgrad v)_\Omega
\end{equation}
Consequently, the following error identity holds:
\begin{equation}\label{eq:error_identity_extended}
\rhoext(v,\br) 
= 
\gapext(v,\br)
+
(\br-\bq,\discgrad v - \nabla u)_\Omega 
+ 
(\discdiv \br - \diver \bq, v-u)_\Omega.
\end{equation}
\end{lemma}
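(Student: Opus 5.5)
The first identity is pure bookkeeping. I will expand $\overline{I}(v)-\overline{D}(\br)$ using the definitions:
\[
\int_\Omega \varphi(\cdot,\discgrad v)+\int_\Omega\psi(\cdot,v)+\int_\Omega\varphi^*(\cdot,\br)+\int_\Omega\psi^*(\cdot,\discdiv\br).
\]
Then I add and subtract $(\br,\discgrad v)_\Omega+(\discdiv\br,v)_\Omega$. Grouping terms, the part with the minus signs is exactly $\gapext(v,\br)$ from \eqref{eq:gap_estimator_ext}, and what remains is the claimed pairing term. The only care needed is finiteness: as in Proposition \ref{prop:identity_bregman}, I assume all integrals are finite, since otherwise both sides are $+\infty$ and there is nothing to prove.

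For \eqref{eq:error_identity_extended}, I will mimic the proof of Proposition \ref{prop:identity_bregman} pointwise. Writing out the four Bregman divergences in $\rhoext$ with the subgradients $\bq$, $\diver\bq$, $\nabla u$, $u$, the left side becomes
\begin{align*}
&\big[\overline{I}(v)-\overline{D}(\br)\big]-\big[I(u)-D(\bq)\big]\\
&\quad-(\bq,\discgrad v-\nabla u)_\Omega-(\diver\bq,v-u)_\Omega-(\nabla u,\br-\bq)_\Omega-(u,\discdiv\br-\diver\bq)_\Omega .
\end{align*}
Here I use $\discgrad u=\nabla u$ and $\discdiv\bq=\diver\bq$, because $u$ and $\bq$ are conforming. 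Strong duality \eqref{eq:strong_duality} kills $I(u)-D(\bq)$, and the first part of the lemma replaces $\overline{I}(v)-\overline{D}(\br)$ by $\gapext(v,\br)+(\discdiv\br,v)_\Omega+(\br,\discgrad v)_\Omega$.

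It then remains to show that the pairing terms combine into $(\br-\bq,\discgrad v-\nabla u)_\Omega+(\discdiv\br-\diver\bq,v-u)_\Omega$. Expanding this target gives
\[
(\br,\discgrad v)+(\discdiv\br,v)-(\bq,\discgrad v)-(\br,\nabla u)-(\diver\bq,v)-(\discdiv\br,u)+(\bq,\nabla u)+(\diver\bq,u).
\]
Expanding the available terms gives
\[
(\discdiv\br,v)+(\br,\discgrad v)-(\bq,\discgrad v)+(\bq,\nabla u)-(\diver\bq,v)+(\diver\bq,u)-(\nabla u,\br)+(\nabla u,\bq)-(u,\discdiv\br)+(u,\diver\bq).
\]
The difference between the two is exactly $(\bq,\nabla u)_\Omega+(\diver\bq,u)_\Omega$. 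This vanishes because $u\in\WonepD{1}$ and $\bq\in\WdivpN{1}$ (definition \eqref{eq:WdivN}), or equivalently because summing the optimality relations \eqref{eq:optimality_relations} gives $I(u)-D(\bq)=(\bq,\nabla u)_\Omega+(\diver\bq,u)_\Omega=0$.

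The main obstacle is integrability: the splitting of integrals and the integration by parts for the exact pair require $(\bq,\nabla u)$ and the cross terms to be finite. I will handle this as in Proposition \ref{prop:identity_bregman}, assuming the relevant integrals are finite (for example via $p$-growth duality pairings). With that assumption the rest is algebra.
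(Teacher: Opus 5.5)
Your proof is correct and follows the paper's argument. The first identity comes from adding and subtracting $(\br,\discgrad v)_\Omega+(\discdiv\br,v)_\Omega$, and the second from expanding the four Bregman divergences and invoking strong duality. The leftover $(\bq,\nabla u)_\Omega+(\diver\bq,u)_\Omega$ you isolate is exactly what the paper's terse ``adding and subtracting the necessary terms'' step implicitly sets to zero.

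Of your two justifications for its vanishing, the one via the optimality relations \eqref{eq:optimality_relations} combined with strong duality is the one that applies directly. The definition \eqref{eq:WdivN} with $p'=1$ only tests against $W^{1,\infty}$ functions, not against $u\in\WonepD{1}$.
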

\begin{proof}
  Taking admissible functions $v\in \Wonepbroken{1}$ and $\br\in \Wdivbroken{1}$ with respect to the modified energies,
  we compute the `duality gap':
\begin{align*}
  \overline{I}(v) - \overline{D}(\br)
  &=
\int_\Omega \left[ \varphi(\cdot,\discgrad v) 
 + \phi^*(\cdot,\br)
\right]
+
\int_\Omega \left[ \psi(\cdot,v) 
 + \psi^*(\cdot,\discdiv\br)
\right]
\\  &=
\int_\Omega \left[ \varphi(\cdot,\discgrad v) 
  - \br\cdot \discgrad v
 + \phi^*(\cdot,\br)
\right]
+
\int_\Omega \left[ \psi(\cdot,v) 
  -v\, \discdiv \br  
 + \psi^*(\cdot,\discdiv\br)
\right]
\\ 
    &\quad + (\discdiv \br, v)_\Omega + (\br,\discgrad v)_\Omega
 \\ &=
 \overline{\eta}(v,\br)
     + (\discdiv \br, v)_\Omega + (\br,\discgrad v)_\Omega
\end{align*}
Adding and substracting the necessary terms to obtain the corresponding Bregman divergences then yields the claimed identity \eqref{eq:error_identity_extended}.
\end{proof}

In order to obtain a useful a posteriori estimate it is then necessary to handle the last two terms in the identity \eqref{eq:error_identity_extended}.
The elementary fact that 
\begin{equation}
  (\br, \nabla s)_\Omega + (\diver \br , s)_\Omega = 0
\end{equation}
holds for all conforming functions $s\in \WonepD{1}$ and $\br\in \WdivpN{1}$ will be useful in what follows;
of course, we assume that they have the appropriate integrability to render the integrals finite (otherwise the statement is vacuously true).
In particular, note that if $\br$ is $W^{1}(\diver;\Omega)$-conforming,
one can substitute the exact solution $u$ for any arbitrary conforming function $s\in \WonepD{1}$:
\begin{equation}\label{eq:error_identity_Hdivconforming}
\rhoext(v,\br) 
= 
\gapext(v,\br)
+
(\br-\bq,\discgrad v - \nabla s)_\Omega 
+ 
(\diver \br - \diver \bq, v-s)_\Omega.
\end{equation}
Similarly, if $v$ is $\WonepD{1}$-conforming, one can swap the exact flux $\bq$ for an arbitrary conforming flux $\bs\in \WdivpN{1}$:
\begin{equation}\label{eq:error_identity_H1conforming}
\rhoext(v,\br) 
= 
\gapext(v,\br)
+
(\br-\bs,\nabla v - \nabla u)_\Omega 
+ 
(\discdiv \br - \diver \bs, v-u)_\Omega.
\end{equation}
Thus, to finish the argument we require Young-like inequalities similar to \eqref{eq:bregman_young} but which include a parameter $\varepsilon>0$ that allows one to absorb the distance to the exact solution on the left-hand-side.
This would then yield an error estimate that includes a distance to the conforming subspace,
which can be interpreted as a non-conformity error.

  \begin{assumption}[Bregman--Young inequality with $\varepsilon$]\label{as:bregman_young_eps}
    Let $\ba_1\in \dom\phi$, $\ba_2\in\dom\partial\phi$, $\br_1\in \dom\phi^*$, and $\br_2,\br_3\in \dom\partial\phi^*$,
    be arbitrary.
%    as well as the subgradients $\bp\in\partial\phi(\ba_2)$, $\bb \in \partial\phi^*(\br_2)$ be arbitrary.
    Suppose further that  the duality relation $\ba_2 \in \partial \phi^*(\br_2)$ holds.
We say that $\varphi$ satisfies the Bregman--Young inequality with $\varepsilon$ if for any $\varepsilon>0$ one can estimate:
\begin{equation}\label{eq:bregman_young_eps}
|(\ba_1 - \ba_2)\cdot(\br_1-\br_3)|
\lesssim 
\varepsilon (\mathcal{D}_{\phi}(\ba_1,\ba_2) + 
\mathcal{D}_{\phi^*}(\br_1,\br_2))
+ \mathcal{D}_{\varphi^*}(\br_1,\br_3).
\end{equation}
  \end{assumption}
  The objects $\ba_2$ and $\br_2$ in \eqref{eq:bregman_young_eps} will usually represent the exact solution,
  so the term with $\varepsilon$ can in principle be absorbed on the left-hand-side.
  We note that in the quadratic case ($\phi(s)=\frac{1}{2}|s|^2$) the second term is not really needed,
  but for more general nonlinearities general it might be. The next proposition shows that this is satisfied by uniformly convex integrands.

  \begin{proposition}\label{prop:young_n_function_eps}
    Suppose the integrand $\phi\colon \RRd\to \RRd$ is such that $\phi(\ba)=\phiN(|\ba|)$ for all $\ba\in\RRd$,
    for some uniformly convex $N$-function $\phiN$.
    Then $\phi$ satisfies Assumption \ref{as:bregman_young_eps};
    the constant in the estimate depends only on the uniform convexity constants $c_{\mathrm{uc}},C_{\mathrm{uc}}$ from \eqref{eq:uniform_convexity}.
  \end{proposition}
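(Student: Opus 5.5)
We prove \eqref{eq:bregman_young_eps} for $\phi(\ba)=\phiN(|\ba|)$, reusing the shifted $N$-function machinery as in Proposition \ref{prop:young_n_function}. The duality relation $\ba_2\in\partial\phi^*(\br_2)$ means $\br_2=\mathcal{A}(\ba_2)$, so $|\br_2|=\phiN'(|\ba_2|)$ and $(\phiN_{|\ba_2|})^*=\phiN^*_{|\br_2|}$. Since $\br_3\in\dom\partial\phi^*$, set $\ba_3\coloneqq\mathcal{A}^{-1}(\br_3)$; then $\mathcal{D}_{\phi^*}(\br_1,\br_3)$ is defined with subgradient $\ba_3$. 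By the equivalence in Proposition \ref{prop:young_n_function}, applied to $\phi^*$, which is again generated by the uniformly convex $N$-function $\phiN^*$, we have
\begin{equation*}
\mathcal{D}_{\phi^*}(\br_1,\br_3)\sim \phiN^*_{|\br_3|}(|\br_1-\br_3|),
\qquad
\mathcal{D}_{\phi^*}(\br_1,\br_2)\sim \phiN^*_{|\br_2|}(|\br_1-\br_2|)\sim|\mathcal{V}^*(\br_1)-\mathcal{V}^*(\br_2)|^2,
\end{equation*}
and similarly $\mathcal{D}_\phi(\ba_1,\ba_2)\sim \phiN_{|\ba_2|}(|\ba_1-\ba_2|)$. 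Here $\mathcal{V}^*$ denotes the analogue of $\mathcal{V}$ built from $\phiN^*$, written out explicitly in the proof since the paper does not name it.

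The first step applies the $\varepsilon$-Young inequality \eqref{eq:fenchel_young_eps} to the shifted function $\phiN_{|\ba_2|}$. This is legitimate because the shifted function is uniformly convex with constants independent of the shift, so $C_\varepsilon$ is shift-independent. We obtain
\begin{equation*}
|(\ba_1-\ba_2)\cdot(\br_1-\br_3)|\le \varepsilon\,\phiN_{|\ba_2|}(|\ba_1-\ba_2|)+C_\varepsilon\,\phiN^*_{|\br_2|}(|\br_1-\br_3|).
\end{equation*}
The first term is already $\lesssim\varepsilon\,\mathcal{D}_\phi(\ba_1,\ba_2)$.

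The second step handles the last term, whose shift $|\br_2|$ is wrong. We use the shift change \eqref{eq:shift_change} for $\phiN^*$ in its reversed form, with $\delta$ playing the role of $\varepsilon$ and its roles interchanged:
\begin{equation*}
\phiN^*_{|\br_2|}(t)\le C_\delta\,\phiN^*_{|\br_3|}(t)+\delta\,|\mathcal{V}^*(\br_2)-\mathcal{V}^*(\br_3)|^2 ,\qquad t=|\br_1-\br_3|.
\end{equation*}
The first piece is $\lesssim C_\delta\,\mathcal{D}_{\phi^*}(\br_1,\br_3)$. For the second piece we use the quasi-triangle inequality for the natural distance, i.e.\ the elementary inequality $|x+y|^2\le 2|x|^2+2|y|^2$ for $\mathcal{V}^*$:
\begin{equation*}
|\mathcal{V}^*(\br_2)-\mathcal{V}^*(\br_3)|^2\le 2|\mathcal{V}^*(\br_2)-\mathcal{V}^*(\br_1)|^2+2|\mathcal{V}^*(\br_1)-\mathcal{V}^*(\br_3)|^2\lesssim \mathcal{D}_{\phi^*}(\br_1,\br_2)+\mathcal{D}_{\phi^*}(\br_1,\br_3).
\end{equation*}

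Collecting the pieces gives
\begin{equation*}
|(\ba_1-\ba_2)\cdot(\br_1-\br_3)|\lesssim \varepsilon\,\mathcal{D}_\phi(\ba_1,\ba_2)+C_\varepsilon\delta\,\mathcal{D}_{\phi^*}(\br_1,\br_2)+C_\varepsilon(C_\delta+\delta)\,\mathcal{D}_{\phi^*}(\br_1,\br_3).
\end{equation*}
Choosing $\delta=\varepsilon/C_\varepsilon$ makes the middle coefficient $\varepsilon$. This yields \eqref{eq:bregman_young_eps}, where the constant in front of $\mathcal{D}_{\phi^*}(\br_1,\br_3)$ depends on $\varepsilon$ as intended, with $\lesssim$ read as allowing $\varepsilon$-dependence in that term.

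The main obstacle is the bookkeeping in the second step. Every constant, including that of the shift change, must depend only on $c_{\mathrm{uc}},C_{\mathrm{uc}}$. We must also check that \eqref{eq:shift_change} applies to $\phiN^*$, i.e.\ that the natural-distance equivalences in \eqref{eq:natural_distance} hold for the conjugate $N$-function. This follows from $\phiN^*$ being uniformly convex, but it must be stated explicitly.
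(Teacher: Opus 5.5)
Your proof is correct, but it routes the shift change differently from the paper.

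\textbf{The paper's route.} It applies \eqref{eq:fenchel_young_eps} with the shift at the \emph{approximate} flux, i.e.\ with the pair $\phiN^*_{|\br_1|}$ and its conjugate $\phiN_{|\mathcal{A}^{-1}(\br_1)|}$. The large term $C_\varepsilon\phiN^*_{|\br_1|}(|\br_1-\br_3|)$ is then already $\sim\mathcal{D}_{\phi^*}(\br_1,\br_3)$, because the natural distance is symmetric up to constants. The only shift change, from $|\mathcal{A}^{-1}(\br_1)|$ to $|\ba_2|$, happens inside the term that already carries the factor $\varepsilon$. Its remainder $|\mathcal{V}(\mathcal{A}^{-1}(\br_1))-\mathcal{V}(\ba_2)|^2$ is directly $\sim\mathcal{D}_{\phi^*}(\br_1,\br_2)$. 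So a fixed shift-change constant suffices, no triangle inequality is needed, and the constant in front of $\mathcal{D}_{\phi^*}(\br_1,\br_3)$ is just the $C_\varepsilon$ from Young.

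\textbf{Your route.} You put the shift at the exact pair $(\ba_2,\br_2)$. This makes the primal term immediate, but it pushes the shift change into the term multiplied by the large $C_\varepsilon$. That is why you need the triangle inequality through $\br_1$ and the calibration $\delta=\varepsilon/C_\varepsilon$. The resulting constant $C_\varepsilon C_{\varepsilon/C_\varepsilon}$ in front of $\mathcal{D}_{\phi^*}(\br_1,\br_3)$ is worse, though it still depends only on $c_{\mathrm{uc}},C_{\mathrm{uc}}$. Both arguments give the inequality in the only sensible reading, with $\varepsilon$-dependence confined to the last term; the paper's proof produces exactly that too.

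\textbf{Two small remarks.}
\begin{enumerate}
\item The form $C_\delta\phiN^*_{|\br_3|}(t)+\delta|\cdot|^2$ that you use is the one displayed in \eqref{eq:shift_change}, not the ``reversed'' one.
\item You do not need to introduce $\mathcal{V}^*$ or re-derive the shift change for $\phiN^*$. Apply the second line of \eqref{eq:shift_change} with $\ba=\ba_2$ and $\bb=\mathcal{A}^{-1}(\br_3)$, using $(\phiN_{|\ba_2|})^*=\phiN^*_{|\br_2|}$ and $(\phiN_{|\mathcal{A}^{-1}(\br_3)|})^*=\phiN^*_{|\br_3|}$. Then run your triangle inequality with $\mathcal{V}$ through $\mathcal{A}^{-1}(\br_1)$.
\end{enumerate}
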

  \begin{proof}
    An application of the Fenchel--Young inequality with $\varepsilon>0$ \eqref{eq:fenchel_young_eps} with the $N$-function $\phiN^*_{|\br_1|}$ yields:
\begin{align*}
|(\ba_1-\ba_2)\cdot(\br_1-\br_3)|
&\leq \varepsilon \varphi_{|{{\phiN}^*}{'}(\br_1)|} (|\ba_1-\ba_2|) 
+ C_\varepsilon  \varphi^*_{|\br_1|}(|\br_1 - \br_3|)
%\\ 
%&= 
% \varepsilon \int_\Omega \varphi^*_{|\phi'(\discgrad v)|} (|\br-\bq|) 
%+ c_\varepsilon \int_\Omega \varphi_{|\discgrad v|}(\discgrad v - \nabla s)
\end{align*}
where we used the fact that $(\phiN^*_{|\br_1|})^*= ({\widehat{\phi}^{**}})_{|{{\phiN}^*}{'}(\br_1)|} = {\widehat{\phi}}_{|{{\phiN}^*}{'}(\br_1)|}$.
The last term is already of the desired form, 
and to handle the first term we apply the change of shift estimate \eqref{eq:shift_change}
with new shift $|{\phiN^*}{'}(\br_2)|=|\ba_2|$:
\begin{align*}
\varepsilon \varphi_{|{{\phiN}^*}{'}(\br_1)|} (|\ba_1-\ba_2|) 
\lesssim 
\varepsilon \left[ 
\varphi_{|\ba_2|} (|\ba_1-\ba_2|) 
+
\varphi_{|{{\phiN}^*}{'}(\br_2)|} (|{{\phiN}^*}{'}(\br_2)- {{\phiN}^*}{'}(\br_1)|) 
\right]
\end{align*}
The claim then follows from the equivalences \eqref{eq:natural_distance} and Proposition \ref{prop:young_n_function}.
  \end{proof}

As an immediate consequence of the Bregman--Young inequality with $\varepsilon$ \eqref{eq:bregman_young_eps} and the error identities \eqref{eq:error_identity_Hdivconforming} and \eqref{eq:error_identity_H1conforming} we then obtain an a posteriori guaranteed bound.

\begin{theorem}[Guaranteed estimate for non-admissible fields]\label{thm:non_conforming_estimate}
  Suppose that Assumption \ref{as:existence} holds,
  and that the integrands $\phi$ and $\psi$ satisfy Assumption \ref{as:bregman_young_eps}. 
  Then:
  \begin{enumerate}
    \item %{($\WdivpN{1}$-conforming case)}
      If $\br$ is $\WdivpN{1}$-conforming,
      then the following estimate holds: 
      \begin{equation}\label{eq:estimate_Hdiv_conforming}
\rhoext(v,\br) \lesssim
\gapext(v,\br) 
+
\inf_{s\in \WonepD{1}}\left[ 
  \int_\Omega \mathcal{D}_{\phi}(\discgrad v,\nabla s)
  +
  \int_\Omega \mathcal{D}_{\psi}(v,s)
\right]
      \end{equation}
    \item %{($\WdivpN{1}$-conforming case)}
      If $v$ is $\WonepD{1}$-conforming,
      then the following estimate holds: 
      \begin{equation}\label{eq:estimate_H1_conforming}
\rhoext(v,\br) \lesssim
\gapext(v,\br) 
+
\inf_{\bs\in \WdivpN{1}}\left[ 
  \int_\Omega \mathcal{D}_{\phi^*}(\br,\bs)
  +
  \int_\Omega \mathcal{D}_{\psi^*}(\br,\bs)
\right]
      \end{equation}
  \end{enumerate}
  The constant in the estimates depend only on the constant from the Bregman--Young inequality with $\varepsilon$ \eqref{eq:bregman_young_eps}.
\end{theorem}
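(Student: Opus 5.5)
We start from the error identities \eqref{eq:error_identity_Hdivconforming} and \eqref{eq:error_identity_H1conforming}; both parts are handled the same way, so we describe part (1) and then indicate the changes for (2).

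\textbf{Part (1).} Fix an arbitrary $s\in\WonepD{1}$ for which the right-hand side of \eqref{eq:estimate_Hdiv_conforming} is finite; otherwise there is nothing to prove. The identity \eqref{eq:error_identity_Hdivconforming} expresses $\rhoext(v,\br)$ as $\gapext(v,\br)$ plus two cross terms:
\[
(\br-\bq,\discgrad v-\nabla s)_\Omega \quad\text{and}\quad (\diver\br-\diver\bq,v-s)_\Omega .
\]
We bound each cross term pointwise using Assumption \ref{as:bregman_young_eps}, with the roles of primal and dual variables interchanged.

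For the $\phi$ term we would like $\ba_1=\discgrad v$, $\ba_3=\nabla s$ and $\br_1=\br$, $\br_2=\bq$, with the duality pair $(\nabla u,\bq)$. However, \eqref{eq:bregman_young_eps} is stated with the free third point on the dual side. We therefore apply it to $\phi^*$ instead of $\phi$. Since $\phi^{**}=\phi$ and the duality relation is symmetric by \eqref{eq:equality_young}, this gives
\[
|(\br-\bq)\cdot(\discgrad v-\nabla s)|\lesssim \varepsilon\big(\mathcal{D}_{\phi^*}(\br,\bq)+\mathcal{D}_{\phi}(\discgrad v,\nabla u)\big)+\mathcal{D}_{\phi}(\discgrad v,\nabla s).
\]
This requires that Assumption \ref{as:bregman_young_eps} hold for $\phi^*$ as well. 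For $N$-functions this is covered by Proposition \ref{prop:young_n_function_eps}, because $\phiN^*$ is again uniformly convex. In general, I would either interpret the assumption symmetrically or say so explicitly; this is the main point that needs care. The subgradient for $\mathcal{D}_\phi(\discgrad v,\nabla s)$ is any element of $\partial\phi(\nabla s)$. The same argument applied to $\psi$, with $\diver\br$, $\diver\bq$, $v$, $u$, $s$, gives the analogous bound with $\mathcal{D}_\psi(v,s)$.

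Summing over the two cross terms, integrating, and using the identity gives
\[
\rhoext(v,\br)\le\gapext(v,\br)+C\varepsilon\,\rhoext(v,\br)+C\Big[\int_\Omega\mathcal{D}_\phi(\discgrad v,\nabla s)+\int_\Omega\mathcal{D}_\psi(v,s)\Big].
\]
Choosing $\varepsilon$ with $C\varepsilon\le 1/2$ lets us absorb the middle term into the left-hand side, provided $\rhoext(v,\br)<\infty$. If $\rhoext(v,\br)=\infty$, I would argue that the identity forces one of the terms on the right to be infinite, or simply restrict attention to the finite case. Taking the infimum over $s$ then proves (1), with a constant depending only on the constant in \eqref{eq:bregman_young_eps}.

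\textbf{Part (2).} This is symmetric: we start from \eqref{eq:error_identity_H1conforming} with an arbitrary $\bs\in\WdivpN{1}$. Now Assumption \ref{as:bregman_young_eps} applies directly, with $\ba_1=\nabla v$, $\ba_2=\nabla u$, $\br_1=\br$, $\br_2=\bq$, $\br_3=\bs$. This yields $\varepsilon$-terms plus $\mathcal{D}_{\phi^*}(\br,\bs)$, and likewise $\mathcal{D}_{\psi^*}(\discdiv\br,\diver\bs)$ for the $\psi$ term; the latter is what the second divergence in \eqref{eq:estimate_H1_conforming} should read. Absorption and the infimum over $\bs$ finish the argument.

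The main obstacle is matching the orientation of the Bregman--Young inequality with $\varepsilon$ in part (1). There the free point lies on the primal side, so the assumption is needed for the conjugate integrands, or in a symmetric form. A secondary issue is justifying the absorption step when $\rhoext(v,\br)$ is infinite.
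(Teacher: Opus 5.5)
Your proposal is correct and matches the argument the paper intends: the paper gives no separate proof, calling the theorem an immediate consequence of \eqref{eq:error_identity_Hdivconforming}, \eqref{eq:error_identity_H1conforming} and the Bregman--Young inequality with $\varepsilon$, with the $\varepsilon\,\rhoext(v,\br)$ term then absorbed. Your extra remarks are also accurate: in part (1) the free point $\nabla s$ (resp.\ $s$) lies on the primal side, so the assumption is really used for $\phi^*$ and $\psi^*$ (which holds for uniformly convex $N$-functions by applying Proposition \ref{prop:young_n_function_eps} to $\phiN^*$), and the last term in \eqref{eq:estimate_H1_conforming} should read $\mathcal{D}_{\psi^*}(\discdiv\br,\diver\bs)$.
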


\begin{remark}\label{rem:non_conforming}
  The estimates \eqref{eq:estimate_Hdiv_conforming} and \eqref{eq:estimate_H1_conforming} can be used in practice as guaranteed upper bounds for the numerical error,
  by setting $s$ in \eqref{eq:estimate_Hdiv_conforming} (resp.\ $\bs$ in \eqref{eq:estimate_H1_conforming}) to be appropriate quasi-interpolants of the non-conforming numerical solution.
  Similar results have been derived the quadratic case $\phi(s)=\frac{1}{2}|s|^2$ \cite{ElAE.2004,EV.2015,AF.2018,C-F.2025},
  where it is known (for instance by using stability properties of nodal averaging quasi-interpolants \cite[Lem.~3.2]{BE.2007}) that one can also swap this for a residual jump estimator:
  \begin{equation}\label{eq:jumps}
    \inf_{s\in \WonepD{2}}\int_\Omega  |\nabla_h v - \nabla s|^2 
    \lesssim \sum_{F \in \facetsint\cup \facetsD} \frac{1}{h_F}\int_F |\jump{v}_F|^2,
    %\norm{h_\Gamma^{-1/2} \jump{v_h} }
  \end{equation}
  where $\jump{v}_F$ denotes the jump of $v$ across a mesh facet $F\in \mathcal{F}_h$,
  which is usually straightforward to implement.
  By using appropriate extensions to the nonlinear setting (e.g.\ \cite[Prop.~A.1]{BK.2023} or \cite[Prop.~3]{BGKR.2026}),
  it should be possible to generalise this to the uniformly convex setting,
  but this is outside of the scope of this work.
A disadvantage of this approach is that the constant in the upper bound ceases to be explicit, a dependence of polynomial degree is introduced,
and it is not clear that the residual jump estimator is locally efficient (unless of course it is itself included in the error measure).
  For other conforming approximations (in the quadratic case) where one still retains guaranteed bounds and local efficiency,
  see for instance \cite{EV.2015,C-F.2025}, where local solves are employed.
  Similar remarks about $\WdivpN{1}$-conformity apply to \eqref{eq:estimate_H1_conforming}; see e.g.\ \cite{EV.2015,EGSV.2022}.

%  \todo{For problems with a $p$-structure in a non-conforming setting, we are only aware of \cite{EV.2013}, where the jump residual estimator was included from the beginning as a non-conformity estimator;
%  however, the $W^{1,p}_D(\Omega)$-norm was employed instead of the natural distance, which in general leads to suboptimal behaviour.}
\end{remark}

\subsection{Data oscillation}

Another situation where it is sometimes difficult to produce feasible functions $v$ and $\br$ is when a balance equation needs to be satisfied.
As an example let us go back to the $\phi$-Laplace problem with the primal energy $I\colon W^{1,\phi}_0(\Omega)\to \RR$ given by
\begin{equation}\label{eq:primal_phi_laplace}
    \begin{gathered}
%      I\colon \WonepD{p} \to \RR, \\ 
    I(v) \coloneqq 
    \int_\Omega \phi(\nabla v) 
    - (f,v)_\Omega - (\BF,\nabla v)_\Omega,
    %- \int_\Omega (f v + F\cdot \nabla v) 
%    - \langle g, v \rangle_{\Gamma_N},
  \end{gathered}
  \end{equation}
  where $\phi(\ba)=\phiN(|\ba|)$ for a uniformly convex $N$-function $\phiN$,
  and $f\in L^{\phi^*}(\Omega)$, $\BF\in L^{\phi^*}(\Omega)^d$ are given.
  In this case the dual energy $D\colon F + W^{\phi^*}(\diver;\Omega)\to \Rext$ reads:
  \begin{equation}
D(\br) = - \int_\Omega \phi^*(\br) 
- \characteristic{-f}(\diver(\br-\BF)).
  \end{equation}
Regarding the error measure $\rhotot$, note that the Bregman divergence corresponding to the linear term in $I$ will vanish, and the its conjugate error measure corresponds to the imposition of the divergence constraint and thus vanishes when working with admissible fluxes.
In other words, the total error measure for this problem is simply
\begin{equation}\label{eq:error_tot_phi_laplace}
  \rhotot(v,\br) = 
  \int_\Omega \mathcal{D}_\varphi(\nabla v, \nabla u)
  +
  \int_\Omega \mathcal{D}_{\varphi^*}(\br,\bq)
\end{equation}

  Since $f$ and $\BF$ belong to an infinite-dimensional space, 
  in general one can only produce fluxes satisfying an approximate balance $\diver(\br-\BF_h)=-f_h$,
  where e.g.\ $\BF_h=\Pi^k_h \BF$ and $f_h=\Pi^k_h f$ denote the orthogonal projections of $\BF$ and $f$ onto piecewise polynomials of degree $k\geq 0$.
  To obtain an a posteriori estimate that takes this discrepancy into account,
  we can employ the modified dual energy $\overline{D}\colon \BF_h+W^{\phi^*}(\diver;\Omega)\to \Rext$ given by:
  \begin{equation}
    \overline{D}(\br) = - \int_\Omega \phi^*(\br) 
- \characteristic{-f_h}(\diver(\br-\BF_h)).
  \end{equation}
  This idea was proposed in the linear case in the recent work \cite{GK.2026}.
  Note that, similarly to the non-conforming estimates of the previous section,
  the integrands (and thus the error measures) are unchanged, as we only modified the admissibility criterion.
  With analogous computations as those in Proposition \ref{prop:identity_bregman}, we find that 
  \begin{equation}\label{eq:identity_osc}
  \rhotot(v,\br) = 
\gapext(v,\br) 
+ (f-f_h,u-v)_\Omega 
+ (\BF-\BF_h,\nabla u - \nabla v)_\Omega,
  \end{equation}
  where the modified gap estimator $\gapext$ is defined on admissible fields $v$ and $\br$ as:
  \begin{equation}
\gapext(v,\br) \coloneqq \gap_\varphi(\nabla v,\br) 
= \int_\Omega [\phi(\nabla v)- \br\cdot\nabla v + \varphi^*(\br)].
  \end{equation}
  To handle the leftover terms, the following local Poincaré-like inequality,
  which holds on any $T\in \triang$ and $w\in W^{1,1}(T)$ will be useful \cite{RD.2007}:
  \begin{equation}\label{eq:poincare_orlicz}
    \int_T \widetilde{\phi}(|w-\Pi_h^0w|) \leq 
    C_P \int_T \widetilde{\phi}(h_T|\nabla w|),
  \end{equation}
  where $\widetilde{\phi}$ is an arbitrary uniformly convex $N$-function, and $C_P>0$ is a constant that depends only on the uniform convexity of $\widetilde{\phi}$ and the shape-regularity of $\triang$ (in the quadratic case one can take $C_P=\pi^{-1}$).
Noting that in the second term we can write 
\begin{equation}
  (f-f_h,u-v)_\Omega 
  = \sum_{T\in \triang} (f-f_h,u-v)_T 
  = \sum_{T\in \triang} (f-f_h,u-v- \Pi_h^0(u-v))_T ,
\end{equation}
an application of the Fenchel--Young inequality \eqref{eq:fenchel_young_eps} and the Poincaré-like inequality \eqref{eq:poincare_orlicz} with the $N$-function $\widetilde{\phi}_{|\nabla v|}$ yields with $\varepsilon>0$:
\begin{equation}
  (f-f_h,u-v)_\Omega 
\leq 
\sum_{T\in\triang}\left[\varepsilon \int_T \widehat{\phi}_{|\nabla v|}(|\nabla u - \nabla v|)
+ C_\varepsilon \int_T \widehat{\varphi}^*_{|\nabla v|}(h_T|f-f_h|)
\right]
\end{equation}
Handling the third term in \eqref{eq:identity_osc} in a similar way, and recalling the equivalence from Proposition \ref{prop:young_n_function},
we conclude that the following a posteriori estimate holds for any $v\in W^{1,\phi}_0(\Omega)$ and $\br\in \BF_h+W^{\phi^*}(\diver;\Omega)$ with $\diver(\br-\BF_h)=-f_h$:
\begin{equation}\label{eq:estimate_data_osc}
  \rhotot(v,\br) \lesssim
\gapext(v,\br) 
+
\sum_{T\in \triang} \mathrm{osc}^2_{T;v}(f;\BF),
\end{equation}
where the constant depends only on the uniform convexity constants $c_{\mathrm{uc}},C_{\mathrm{uc}}$ from $\phiN$,
and the shape-regularity of $\triang$ (through $C_P$),
and the \emph{local data oscillation estimator} is defined by 
\begin{equation}\label{eq:data_osc}
  \mathrm{osc}^2_{T;v}(f;\BF) 
  \coloneqq 
  \int_T \widehat{\phi}^*_{|\nabla v|}(h_T|f-f_h|)
+ 
\int_T \widehat{\varphi}^*_{|\nabla v|}(\BF- \BF_h).
\end{equation}

\subsection{The obstacle problem: positivity of the multiplier}\label{sec:obstacle}
We turn now to a non-smooth example where the methodology developed thus far allows us to derive an estimate using a non-feasible field.
Namely, we consider the obstacle problem, where the primal energy is obtained by setting (cf.\ \eqref{eq:indicator}):
\begin{subequations}
\begin{gather}
  \varphi(\cdot,\ba)\coloneqq \tfrac{1}{2}|\ba|^2 
%  \qquad 
  \\
  \psi(\cdot,s) = -f(\cdot)s + \chi_{K(x)}(s),
  =
  \left\{ 
    \begin{array}{cc}
-f(\cdot)s &\text{if }s\geq g(\cdot),\\
      +\infty &\text{otherwise}.
    \end{array}
    \right.,
%  \qquad 
%  \ba\in \RRd,\, s\in \RR.
\end{gather}
\end{subequations}
where $K(x)\coloneqq \{s \in \RR \mid s\geq g(x)\}$, with $g\in H^1(\Omega)$ a given obstacle with $g|_{\partial\Omega}\leq 0$ and $f\in L^2(\Omega)$.
%It is a classical result that the primal problem has a unique solution $u\in H^1_0(\Omega)$,
We shall make some simplifying assumptions just to illustrate the main idea;
a more thorough analysis will be carried out in a subsequent work.
In this respect, assuming that the fluxes $\br$ belong to $\Hdiv$, %(which is the case with usual flux reconstruction procedures),
the dual energy has the representation \eqref{eq:dual_energy2}, with (see e.g.\ \cite{ET.1999,BK.2025})
\begin{subequations}
\begin{gather}
  \varphi^*(\cdot,\bb)=\tfrac{1}{2}|\bb|^2
\\
  \psi^*(\cdot,s) = g(\cdot)(s+f(\cdot)) + \chi_{\geq 0}(-s-f(\cdot))=
  \left\{ 
    \begin{array}{cc}
      g(\cdot)(s+f(\cdot)) &\text{if }-s-f(\cdot)\geq 0,\\
      +\infty &\text{otherwise}.
    \end{array}
    \right..
    \label{eq:obstacle_dual_constraint}
\end{gather}
\end{subequations}
For general $\br$, the $\Hdiv$ regularity assumption is not restrictive, since the reconstructed fluxes corresponding to numerical approximations will usually satisfy this;
on the other hand, for simplicity it is also assumed to hold for the exact flux $\bq =\nabla u$, which amounts to a regularity assumption
(in general $-\diver\bq-f\in H^{-1}(\Omega)$ is a Radon measure).
For a given flux $\br\in \Hdiv$ let us denote $\mu_{\br}\coloneqq -\diver \br-f\in L^2(\Omega)$;
the indicator function in \eqref{eq:obstacle_dual_constraint} means that $\br$ is feasible only if $\mu_{\br}$ is non-negative. 
This problem fits the setting described in Assumption \ref{as:existence}, and thus a generalised Prager--Synge identity holds \cite{RV.2018,BK.2025}:
\begin{equation}\label{eq:ps_obstacle}
  \tfrac{1}{2}\norm{\nabla v - \nabla u}^2_\Omega 
  + (\mu_{\bq},v-u)_\Omega 
  +\tfrac{1}{2}\norm{\br-\bq}^2_\Omega 
  + (\mu_{\br},u-g)_\Omega
  = 
  \tfrac{1}{2}\norm{\nabla v - \br}^2_\Omega 
  + (\mu_{\br},v-g)_\Omega,
\end{equation}
where $v\in K\coloneqq \{v\in \Honezero \mid v\geq g\text{ a.e.\ in }\Omega\}$ and $\br \in K^*\coloneqq \{\bs\in\Hdiv \mid \mu_{\bs}\geq 0 \text{ a.e.\ in }\Omega\}$ are arbitrary.
Generic numerical schemes are not necessarily guaranteed to produce multiplier approximations $\mu_{\br_h}$ that satisfy the positivity constraint,
and are thus inadmissible in the Prager--Synge identity \eqref{eq:ps_obstacle}.
Motivated by the previous sections we then consider the unconstrained dual energy $\overline{D}\colon \Hdiv \to \RR$ given as:
\begin{equation}
  \overline{D}(\br)\coloneqq 
  -\tfrac{1}{2}\norm{\br}^2_\Omega 
  + (g,\mu_{\br})_\Omega.
\end{equation}
Following analogous steps as before,
one obtains the same identity \eqref{eq:ps_obstacle};
note however, that some terms do not represent errors anymore, since $\mu_{\br}$ is not necessarily positive.
Splitting $\mu_{\br}=\mu_{\br_+}-\mu_{\br_{-}}$ into positive and negative parts we can re-write:
\begin{equation}\label{eq:ps_obstacle2}
  \tfrac{1}{2}\norm{\nabla v - \nabla u}^2_\Omega 
  + (\mu_{\bq},v-u)_\Omega 
  +\tfrac{1}{2}\norm{\br-\bq}^2_\Omega 
  + (\mu_{\br_+},u-g)_\Omega
  = 
  \tfrac{1}{2}\norm{\nabla v - \br}^2_\Omega 
  + (\mu_{\br_+},v-g)_\Omega
  + (\mu_{\br_-},u-v)_\Omega
\end{equation}
With a simple application of Young's inequality we can obtain a guaranteed bound with each term representing an error quantity:
\begin{equation}\label{eq:ps_obstacle2_bound}
\norm{\nabla v - \nabla u}^2_\Omega 
  + (\mu_{\bq},v-u)_\Omega 
+\norm{\br-\bq}^2_\Omega 
  + (\mu_{\br_+},u-g)_\Omega
\lesssim
\norm{\nabla v - \br}^2_\Omega 
  + (\mu_{\br_+},v-g)_\Omega
  + \norm{\mu_{\br_-}}^2_{H^{-1}(\Omega)}.
\end{equation}
Hence, for schemes that do not produce positive multipliers,
one only needs the additional ``non-feasibility'' estimator $\norm{\mu_{\br_-}}^2_{H^{-1}(\Omega)}$.
Some numerical methods (e.g.\ the Proximal Galerkin method \cite{KS.2024}) can in fact guarantee positivity of the projection of $\mu_{\br}$ into piecewise polynomials;
using this we can estimate:
\begin{align*}
  \norm{\mu_{\br_-}}_{H^{-1}(\Omega)}
  &= \sup_{v\in \Honezero} \frac{(\mu_{\br_-},v)_\Omega}{\norm{\nabla v}_\Omega}
  = \sup_{v\in \Honezero} \frac{(\mu_{\br_-},v-\Pi_h v)_\Omega}{\norm{\nabla v}_\Omega}
%  \\&
  \leq 
  \sup_{v\in \Honezero} \frac{\sum_{T\in \triang} \norm{\mu_{\br_-}}_T \frac{h_T}{\pi} \norm{\nabla v}_T}{\norm{\nabla v}_\Omega},
\end{align*}
where we used a local Poincaré inequality.
Combining this with the estimate \eqref{eq:ps_obstacle2_bound} above,
we conclude that the (squared) total error can be controled, with an explicit constant, 
using the local estimators:
\begin{equation}
  \begin{gathered}
    \sum_{T\in \triang}\left[ \eta_{\mathrm{cr};T}^2(v,\br)
      + \eta^2_{\mathrm{comp};T}(v,\br)
      + \eta^2_{\mathrm{pos};T}(\br)
    \right]
    \\ 
    \eta_{\mathrm{cr};T}^2(v,\br) \coloneqq \tfrac{1}{2}\norm{\nabla v - \br}^2_\Omega
    \qquad 
    \eta^2_{\mathrm{comp};T}(v,\br) \coloneqq (\mu_{\br_+},v-g)_T
    \qquad 
       \eta^2_{\mathrm{pos};T}(\br) \coloneqq 
       \frac{h_T^2}{\pi^2} \norm{\mu_{\br_-}}^2_T
  \end{gathered}
\end{equation}
which correspond to the flux constitutive relation, the complementarity principle, and the positivity of the multiplier, respectively.

%\newpage
\section{Quasioptimal a priori bounds for uniformly convex energies}\label{sec:quasioptimality}

Let us return to the $\phi$-Laplace problem \eqref{eq:primal_phi_laplace}, now allowing for mixed boundary conditions:
  \begin{equation}
    \begin{gathered}
%      I\colon \WonepD{p} \to \RR, \\ 
    I(v) \coloneqq 
    \int_\Omega \phi(\nabla v) 
    - (f,v)_\Omega - (\BF,\nabla v)_\Omega
    %- \int_\Omega (f v + F\cdot \nabla v) 
    - \langle g, v \rangle_{\Gamma_N},
  \end{gathered}
  \end{equation}
  where $\phi(\ba)=\phiN(|\ba|)$ for a uniformly convex $N$-function $\phiN$,
  and where $f\in L^{\phi^*}(\Omega)$, $\BF\in L^{\phi^*}(\Omega)^d$, $g\in L^{\phi^*}(\Gamma_N)$ are given ($g$ could be also taken in the space dual to the normal traces of $\WdivpN{\phi}$).
  % W^{-\smash{\frac{1}{p'}},p'}_{00}(\Gamma_N)$ are given,
 %  $\phiN$ is a given uniformly convex $N$-function.
  We remark that the linear terms are meant to represent a general functional in $(\WonepD{\phi})^*$,
  and the energy describes the problem with boundary conditions $u|_{\Gamma_D}=0$, $(\bq-\BF)\bn|_{\Gamma_N}=g$.

An appropriate discrete approximation of the energy $I_h\colon \CRDirichlet \to \RR$ (defined on the Crouzeix--Raviart finite element space $\CRDirichlet$) is given by
\begin{equation}
I_h(v_h) \coloneqq 
\int_\Omega \phi(\nabla_h v_h) 
    - (f_h,v_h)_\Omega - (\BF_h,\nabla_h v_h)_\Omega
    - ( g_h, v_h )_{\Gamma_N},
\end{equation}
where $f_h \coloneqq \Pi_h f$, $\BF_h\coloneqq \Pi_h \BF$ and $g_h\coloneqq \pi_h g$ are piecewise constant approximations of the load terms.
We denote the exact solution of the discrete primal minimisation problem as $u_h$;
i.e.\ $u_h$ is the unique function that satisfies $I_h(u_h)=\min_{v_h\in \CRDirichlet}I_h(v_h)$.
Here the associated dual energy (defined on the Raviart--Thomas space $\RTNeumann$) $D_h\colon \BF_h + \RTNeumann \to \Rextm$ takes the form:
\begin{equation}
D_h(\br_h) \coloneqq 
-\int_\Omega \varphi^*(\Pi_h\br_h)
- \characteristic{-f_h}(\diver(\br_h - \BF_h))
- \chi^{\Gamma_N}_{\{g_h\}}((\br_h - \BF_h)\bn)
\end{equation}
And we denote the discrete dual solution by $\bq_h \in K_h \coloneqq \mathrm{dom}(D_h)$.
In this setting we know furthermore that discrete strong duality holds: $I_h(u_h)=D_h(\bq_h)$,
and the optimality relation $\Pi_h \bq_h = \mathcal{A}(\nabla_h u_h)$ is satisfied a.e.\ in $\Omega$
\cite[Prop.~3.1]{Bar.2021}.
Moreover, given the primal solution $u_h\in \CRDirichlet$, the dual solution is available through the generalised Marini formula (for an inverse formula delivering $u_h$ if $\bq_h$ is known, see e.g.\ \cite[Prop.~3.7]{BK.2024}):
\begin{equation}\label{eq:marini_phi_laplace}
  \bq_h = \mathcal{A}(\nabla_h u_h) - \tfrac{f_h}{d}(\mathrm{id}_{\RR^d}- \Pi_h \mathrm{id}_{\RR^d}),
\end{equation}
where $\mathrm{id}_{\RRd}$ is simply the identity map on $\RRd$.

%Now, regarding the error measure $\rhotot$ associated to the continuous problem, note that the Bregman divergence corresponding to the linear term in $I$ will vanish, and the its conjugate error measure corresponds to the imposition of constraints and thus vanishes as well when working with admissible fields.
%In other words, the (continuous) error measure for this problem is simply
%\begin{equation}
%  \rhotot(\nabla v,\br) = 
%  \int_\Omega \mathcal{D}_\varphi(\nabla v, \nabla u)
%  +
%  \int_\Omega \mathcal{D}_{\varphi^*}(\br,\bq)
%\end{equation}
Now, thanks to the fact that $\varphi$ is not $x$-dependent,
the exact same error measure \eqref{eq:error_tot_phi_laplace} can be used at the discrete level.
In fact, we can extend the definition for general broken functions $v\in \Wonepbroken{\phi}$ as
\begin{equation}
  \rhotot(\nabla_h v_h,\br) = 
  \int_\Omega \mathcal{D}_\varphi(\nabla_h v_h, \nabla u)
  +
  \int_\Omega \mathcal{D}_{\varphi^*}(\br,\bq),
\end{equation}
and to measure the discrete error (i.e.\ the distance to the discrete solution $(u_h,\bq_h)$) the same notion of distance can be employed:
\begin{equation}
  \rhototh(\nabla_h v_h,\Pi_h\br_h) \coloneqq
  \int_\Omega \mathcal{D}_\varphi(\nabla_h v_h, \nabla_h u_h)
  +
  \int_\Omega \mathcal{D}_{\varphi^*}(\Pi_h\br_h,\Pi_h\bq_h)
\end{equation}

%We have more explicitly:
%\begin{align*}
%  \rhotot(\nabla_h v_h,\br) &= 
%  \int_\Omega \left[\varphi(\nabla_h v_h) - \varphi(\nabla u)
%  - \mathcal{A}(\nabla u)(\nabla_h v - \nabla u)\right]
%  +
%  \int_\Omega \left[\varphi^*(\br) - \varphi(\bq)
%  - \mathcal{B}(\bq)(\nabla_h v - \nabla u)\right]
%\end{align*}

As for the duality gap estimator, we follow an analogous strategy to Section \ref{sec:local_efficiency}.
Note that only the estimator related to $\varphi$ will appear, since the remaining terms in the energy are linear and so $\mathcal{D}_\psi$ vanishes.
Hence, we define the discrete estimator 
for (discretely) admissible fields $v_h\in \CRDirichlet$, $\br_h\in \BF_h + \RTNeumann$  simply as:
\begin{align*}
\gaph(\nabla_h v_h,\br_h) \coloneqq 
%I_h(v_h) - D_h(\br_h)
\eta^2_{\varphi}(\nabla_h v_h,\Pi_h\br_h )
=
\int_\Omega 
[\varphi(\nabla_h v_h) - 
\Pi_h\br_h\cdot \nabla_h v_h + \varphi^*(\Pi_h\br_h)].
\end{align*}
We remark that the integrand in the discrete estimator is pointwise non-negative, and vanishes if and only if the optimality condition $\Pi_h \br_h = \mathcal{A}(\nabla_h v_h)$ is satisfied.
This makes it potentially useful in the development of inexact solvers, since it characterises the distance to the discrete solution in terms of computable quantities; 
see \cite{DS.2025} for an application of this estimator as an upper bound in the context of the $p$-Laplace problem.

\begin{theorem}[A priori error identity and quasioptimality]\label{thm:quasioptimal}
For any $v_h\in \CRDirichlet$ and $\br_h\in K_h$, the following discrete error identity holds:
\begin{equation}\label{eq:discrete_prager_synge}
\rhototh(\nabla_h v_h,\Pi_h\br_h)
= 
\gaph(\nabla_h v_h ,\Pi_h \br_h).
\end{equation}
Consequently,
we obtain quasioptimality:
\begin{equation}\label{eq:quasioptimality_phi_laplace}
\rhotot(\nabla_h u_h,\Pi_h \bq_h)
\sim
\inf_{v_h \in \CRDirichlet}
\int_\Omega \mathcal{D}_{\phi}(\nabla_h v_h, \nabla u)
  +
  \inf_{\substack{\br_h \in K_h}}
  \int_\Omega   \mathcal{D}_{\phi^*}(\Pi_h\br_h, \bq),
\end{equation}
where the constants depend only on the uniform convexity constants $c_{\mathrm{uc}},C_{\mathrm{uc}}$ of $\varphi$.
\end{theorem}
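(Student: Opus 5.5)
The plan is to mimic the proof of Proposition~\ref{prop:identity_bregman} at the discrete level and then combine the resulting identity with Theorem~\ref{thm:local_efficiency}-type arguments and a triangle-type inequality for Bregman divergences.

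\textbf{Step 1: the discrete identity \eqref{eq:discrete_prager_synge}.} Write out
\[
\mathcal{D}_\varphi(\nabla_h v_h,\nabla_h u_h)+\mathcal{D}_{\varphi^*}(\Pi_h\br_h,\Pi_h\bq_h),
\]
using the subgradients $\Pi_h\bq_h=\mathcal{A}(\nabla_h u_h)$ and $\nabla_h u_h=\mathcal{A}^{-1}(\Pi_h\bq_h)$. By the Fenchel--Young equality at $(\nabla_h u_h,\Pi_h\bq_h)$, the sum equals
\[
\varphi(\nabla_h v_h)-\Pi_h\br_h\cdot\nabla_h v_h+\varphi^*(\Pi_h\br_h)+(\Pi_h\br_h-\Pi_h\bq_h)\cdot(\nabla_h v_h-\nabla_h u_h)
\]
pointwise. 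It therefore remains to show that the last term integrates to zero.

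Since $\nabla_h$ of Crouzeix--Raviart functions is piecewise constant, the projection $\Pi_h$ may be dropped inside the integral, so the term equals $(\br_h-\bq_h,\nabla_h(v_h-u_h))_\Omega$. Now $\br_h-\bq_h\in\RTNeumann$, because both fields lie in $K_h$: their normal traces agree with $g_h$ on $\Gamma_N$ and their divergences satisfy $\diver(\br_h-\bq_h)=0$. The discrete integration by parts formula \eqref{eq:int_by_parts_discrete} then gives
\[
(\br_h-\bq_h,\nabla_h(v_h-u_h))_\Omega=-(\diver(\br_h-\bq_h),v_h-u_h)_\Omega=0.
\]
This proves \eqref{eq:discrete_prager_synge}.

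\textbf{Step 2: the direction ``$\gtrsim$'' in \eqref{eq:quasioptimality_phi_laplace}.} This is trivial: taking $v_h=u_h$ and $\br_h=\bq_h$ in the infima gives the right-hand side bounded by $\rhotot(\nabla_h u_h,\Pi_h\bq_h)$.

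\textbf{Step 3: the direction ``$\lesssim$'' (main step).} Fix arbitrary $v_h\in\CRDirichlet$ and $\br_h\in K_h$. The Bregman divergences are equivalent to the natural distance $|\mathcal{V}(\cdot)-\mathcal{V}(\cdot)|^2$ by Proposition~\ref{prop:young_n_function} and \eqref{eq:natural_distance}. This natural distance satisfies a quasi-triangle inequality, since $|x-z|^2\le 2|x-y|^2+2|y-z|^2$. Passing through $\nabla_h v_h$, we obtain
\[
\int_\Omega\mathcal{D}_\varphi(\nabla_h u_h,\nabla u)\lesssim\int_\Omega\mathcal{D}_\varphi(\nabla_h v_h,\nabla u)+\int_\Omega\mathcal{D}_\varphi(\nabla_h v_h,\nabla_h u_h),
\]
and analogously for $\varphi^*$ via $\Pi_h\br_h$ (the equivalence is symmetric, so the argument order does not matter). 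The discrete errors are then bounded by Step~1:
\[
\int_\Omega\mathcal{D}_\varphi(\nabla_h v_h,\nabla_h u_h)+\int_\Omega\mathcal{D}_{\varphi^*}(\Pi_h\br_h,\Pi_h\bq_h)=\gaph(\nabla_h v_h,\Pi_h\br_h).
\]

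Finally, the gap is estimated by continuous errors exactly as in Theorem~\ref{thm:local_efficiency}. Using $\nabla u=\mathcal{A}^{-1}(\bq)$, the pointwise identity in the subsequent remark gives
\[
\gaph=\int_\Omega\left[\mathcal{D}_\varphi(\nabla_h v_h,\nabla u)+\mathcal{D}_{\varphi^*}(\Pi_h\br_h,\bq)-(\nabla_h v_h-\nabla u)\cdot(\Pi_h\br_h-\bq)\right].
\]
The Bregman--Young inequality (Proposition~\ref{prop:young_n_function}) then bounds $\gaph$ by a constant times $\int_\Omega\mathcal{D}_\varphi(\nabla_h v_h,\nabla u)+\int_\Omega\mathcal{D}_{\varphi^*}(\Pi_h\br_h,\bq)$. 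Taking infima over $v_h$ and $\br_h$ separately finishes the argument. The two infima decouple because the bound holds for every pair $(v_h,\br_h)$.

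\textbf{Main obstacle.} The delicate point is the quasi-triangle inequality for $\mathcal{D}_\varphi$, whose arguments and shifts are asymmetric. The cleanest route is through $|\mathcal{V}(\ba)-\mathcal{V}(\bb)|^2$, and it is exactly here that the constants are seen to depend only on $c_{\mathrm{uc}},C_{\mathrm{uc}}$. Should the equivalence with the $\mathcal{V}$-distance be stated only in one argument order, I would instead use the shift-change estimates \eqref{eq:shift_change}.

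A secondary check is that $\br_h-\bq_h$ genuinely lies in $\RTNeumann$, with matching $\BF_h$ and $g_h$ data. This is needed so that \eqref{eq:int_by_parts_discrete} applies without boundary terms.
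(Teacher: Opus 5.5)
Your proof is correct. The quasioptimality half is the paper's argument: a quasi-triangle inequality, then the discrete identity to eliminate $(u_h,\bq_h)$, then rewriting $\gaph$ as in Theorem~\ref{thm:local_efficiency}, then the Bregman--Young inequality, and finally taking the two infima separately. The differences are in how you justify two of these steps.

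For the discrete identity \eqref{eq:discrete_prager_synge}, the paper simply invokes discrete strong duality $I_h(u_h)=D_h(\bq_h)$ from the literature together with $\Pi_h\bq_h=\mathcal{A}(\nabla_h u_h)$. You instead reduce $\rhototh-\gaph$ to the cross term $(\br_h-\bq_h,\nabla_h(v_h-u_h))_\Omega$. You then kill it with \eqref{eq:int_by_parts_discrete}, using that $\br_h-\bq_h\in\RTNeumann$ is divergence-free.

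Your route needs only the pointwise optimality relation and the feasibility $\bq_h\in K_h$, not strong duality as a separate input. It also shows directly why the data $\BF_h$, $f_h$, $g_h$ drop out. The paper's route is shorter given the citation. However, converting $I_h(v_h)-D_h(\br_h)$ into $\gaph$ tacitly needs the same discrete integration by parts, now with the Neumann datum $g_h$, to identify $(\Pi_h\br_h,\nabla_h v_h)_\Omega$ with the load terms.

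For the triangle-type step, you pass through the symmetric quantity $|\mathcal{V}(\cdot)-\mathcal{V}(\cdot)|^2$. This is more transparent than the paper's appeal to \eqref{eq:triangle_unif_convex}. For Bregman divergences with different base points, that appeal implicitly also requires the shift-change estimates \eqref{eq:shift_change}.

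Finally, you state the trivial reverse bound (take $v_h=u_h$ and $\br_h=\bq_h$), which the paper leaves implicit.
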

\begin{proof}
  The proof of the discrete Prager--Synge identity \eqref{eq:discrete_prager_synge} is a straightforward consequence of the discrete strong duality relation:
\begin{equation}
\int_\Omega \varphi(\nabla_h u_h) 
    - (f_h,u_h)_\Omega - (\BF_h,\nabla_h u_h)_\Omega
    - ( g_h, u_h )_{\Gamma_N}
    = 
    - \int_\Omega \varphi^*(\Pi_h \bq_h),
\end{equation}
and the optimality condition $\Pi_h \bq_h = \mathcal{A}(\nabla_h u_h)$.% \Leftrightarrow \nabla_h u_h = \mathcal{B}(\Pi_h \bq_h)$.
%\commentalexei{For general $\br_h \in \Wdivbroken{p'}$ (drop the indicator functions in $D_h$), and $v_h\in \Wonepbroken{p}$ we can prove:
%\begin{equation*}
%  \rhotot(\nabla_h v_h,\Pi_h \br_h)
%=
%\gaph(\nabla_h v_h ,\Pi_h \br_h)
%+ (\br_h-\bq_h,\nabla_h v_h - \nabla_h u_h)_\Omega.
%\end{equation*}
%Could this be useful to get estimates for DG?
%}

To prove the quasioptimality \eqref{eq:quasioptimality_phi_laplace}, first note that the discretisation error can be estimated using the triangle-type inequality \eqref{eq:triangle_unif_convex} as:
\begin{align}
\rhotot(\nabla_h u_h,\Pi_h \bq_h)
&\lesssim 
  \int_\Omega \mathcal{D}_\varphi(\nabla_h v_h, \nabla u)
  +
  \int_\Omega \mathcal{D}_\varphi(\nabla_h v_h, \nabla_h u_h)
  \\
&\quad+
  \int_\Omega \mathcal{D}_{\varphi^*}(\Pi_h\br_h,\bq)
  +
  \int_\Omega \mathcal{D}_{\varphi^*}(\Pi_h\br_h,\Pi_h\bq_h)
\\&= 
  \int_\Omega \mathcal{D}_\varphi(\nabla_h v_h, \nabla u)
  +
  \int_\Omega \mathcal{D}_{\varphi^*}(\Pi_h\br_h,\bq)
  +
\gaph(\nabla_h v_h ,\Pi_h \br_h).
\end{align}
where $v_h\in \CRDirichlet$ and $\br_h\in K_h$ are abritrary;
we employed the discrete error identity \eqref{eq:discrete_prager_synge} in the last step to get rid of the discrete solution $(u_h,\bq_h)$ on the right-hand-side.

Now, we exploit the fact that the error estimator is essentially the same for the continuous and discrete problems;
this will allow us to insert the exact solution on the right-hand-side to obtain best-approximation terms.
The argument to estimate the last term is completely analogous to the proof of Theorem \ref{thm:local_efficiency}:
\begin{align*}
\gaph(\nabla_h v_h ,\Pi_h \br_h)
  &=
  \eta^2_{\phi}(\nabla_h v, \Pi_h\br_h)
  \pm \int_\Omega \phi^*(\bq) 
  \mp \int_\Omega \nabla_h v_h \cdot \bq
\\&
=
\int_\Omega \left[
  \mathcal{D}_{\phi}(\nabla_h v_h, \nabla u)
  - \nabla_h v_h\cdot(\Pi_h\br_h-\bq)
  + \phi^*(\Pi_h\br_h)
  -\phi^*(\bq)
\right]
\\&
=
\int_\Omega \left[
  \mathcal{D}_{\phi}(\nabla_h v_h, \nabla u)
  +
  \mathcal{D}_{\phi^*}(\Pi_h\br_h, \bq)
  - (\nabla_h v_h-\nabla u)\cdot(\Pi_h\br_h-\bq),
\right]
\end{align*}
where we used the continuous optimality condition $\nabla u = \mathcal{A}^{-1}(\bq)$ and
Lemma \ref{lem:bregman}\ref{lem:bregman_sharing_subgradients}.
We can conclude the proof by applying Proposition \ref{prop:young_n_function} and taking the appropriate infima.

\end{proof}

	\printbibliography
	
	%bibtex 
%	\bibliographystyle{alpha.bst}
%  \bibliography{literatur}

\end{document}